\documentclass[11pt,oneside,english,british]{amsart}
\usepackage[T1]{fontenc}
\usepackage[latin9]{inputenc}
\usepackage{amsthm}
\usepackage{graphicx}
\usepackage{amssymb}
\usepackage{esint}

\makeatletter

\let\SF@@footnote\footnote
\def\footnote{\ifx\protect\@typeset@protect
    \expandafter\SF@@footnote
  \else
    \expandafter\SF@gobble@opt
  \fi
}
\expandafter\def\csname SF@gobble@opt \endcsname{\@ifnextchar[
  \SF@gobble@twobracket
  \@gobble
}
\edef\SF@gobble@opt{\noexpand\protect
  \expandafter\noexpand\csname SF@gobble@opt \endcsname}
\def\SF@gobble@twobracket[#1]#2{}

\numberwithin{equation}{section} 
\numberwithin{figure}{section} 
\theoremstyle{plain}
\theoremstyle{plain}
\newtheorem{thm}{Theorem}
  \theoremstyle{definition}
  \newtheorem{defn}[thm]{Definition}
  \theoremstyle{plain}
  \newtheorem{lem}[thm]{Lemma}
 \theoremstyle{definition}
  \newtheorem{example}[thm]{Example}

\usepackage[T1]{fontenc}

\makeatother

\usepackage{babel}

\begin{document}

\title[Generalised W.T.N \& space of Lagrangian submersions]{Generalized Weinstein Tubular neighbourhood \&\\
Space of Lagrangian submersions}

\author{Nicolas Roy}

\date{30 April 2009}

\selectlanguage{english}%

\address{Humbolt universität zu Berlin, Germany. }

\email{\texttt{roy@math.hu-berlin.de}}

\selectlanguage{british}%

\dedicatory{This article is dedicated to Kat.}

\maketitle

\section{Introduction}

The standard Weinstein tubular neighbourhood (WTN in the sequel) theorem
\cite{weinstein1} states that for any Lagrangian submanifold $L_{0}$
of a symplectic manifold $M$, there is a tubular neighbourhood of
$L_{0}$ which is symplectomorphic to a tubular neighbourhood of the
$0$-section in the cotangent bundle $T^{*}L_{0}$. Through this symplectomorphism
the Lagrangian submanifolds close to $L_{0}$ correspond therefore
to Lagrangian sections of $T^{*}L_{0}$, i.e., closed $1$-forms on
$L_{0}$. This allows to give a (infinite dimensional) manifold structure
to the space $\mathcal{L}\left(M\right)$ of all Lagrangian submanifolds
of $M$, as a submanifold of the space $\mathcal{S}\left(M\right)$
of all submanifolds.

The aim of this paper is to present a generalisation of the standard
WTN, where $M$ is replaced by a symplectic bundle%
\footnote{A symplectic bundle is a locally trivial fibration $p:X\rightarrow B$
where the fibres of $p$ are equipped with a symplectic form, which
depends smoothly on the point on the base $B$. See Section \ref{sub-symplectic-bundles}.%
} $p:X\rightarrow B$ and $L$ by a subbundle $p_{L}:L\rightarrow B$,
with $L\subset X$, whose fibres are Lagrangian in the fibres of $p$.
This allows us to prove that the space $\mbox{Subb}_{\mbox{Lag}}\left(X\right)$
of Lagrangian subbundles of $X$ is a Fréchet submanifold of the space
$\mbox{Subb}\left(X\right)$ of all subbundles.

Our main motivation is actually the study of the space $\mbox{Fib}_{\mbox{Lag}}\left(M,B\right)$
of Lagrangian fibrations $\pi:M\rightarrow B$ of a given symplectic
manifold $M$. This fits in the previous setting if one considers
the symplectic bundle $X=M\times B$ over $B$. Indeed, the graph
of the section $\left(\mathbb{I},\pi\right)\in\Gamma\left(M,X\right)$
is a Lagrangian subbundle%
\footnote{Here $X$ is viewed alternatively as a bundle over $M$ or over $B$. %
} of $X$. This provides a way to show that $\mbox{Fib}_{\mbox{Lag}}\left(M,B\right)$
is a Fréchet manifold. This is even a principal bundle with structure
group $\mbox{Diff}\left(B\right)$, but this will be explained in
subsequent papers \cite{humiliere_roy_1,humiliere_roy_2}.

Our interest in the space of Lagrangian fibrations comes from completely
integrable systems. Indeed, a certain class of completely integrable
Hamiltonian systems on a given symplectic manifold $M$ is well-parametrised
by the quotient of $C^{\infty}\left(B\right)\times\mbox{Fib}_{\mbox{Lag}}\left(M,B\right)$
by $\mbox{Diff}\left(B\right)$, which is a Fréchet manifold, as shown
in \cite{humiliere_roy_1}.

The plan of the present paper is the following. First, we introduce
the notion of fibred differential forms. Roughly speaking, given a
fibration $p:X\rightarrow B$, a $p$-fibred form is a smooth differential
form on $M$ but which can be contracted only with vectors tangent
to the fibres of $p$. It turns out that the space of $p$-fibred
$1$-forms $\Omega_{p}^{1}\left(X\right)$ is precisely the suitable
model space which describes the neighbourhood in $\mbox{Subb}\left(X\right)$
of a given Lagrangian subbundle $L\in\mbox{Subb}_{\mbox{Lag}}\left(X\right)$,
in analogy with the space $\Omega^{1}\left(T^{*}N\right)$ which is
the model space for a neighbourhood in $\mathcal{S}\left(M\right)$
of a given Lagrangian submanifold $L\in\mathcal{L}\left(X\right)$.
In Section \ref{sec:fibred-differential-forms}, we review the main
properties of these fibred forms. In particular, we explain the notion
of fibred differential $d_{p}$ and show that the space of fibred
closed $1$-forms $\mathcal{Z}_{p}^{1}\left(X\right)$ is a direct
summand%
\footnote{This means that $\mathcal{Z}_{p}^{1}\left(X\right)$ is closed and
$\Omega_{p}^{1}\left(X\right)$ is (topologically) isomorphic to $\mathcal{Z}_{p}^{1}\left(X\right)\oplus F$,
where $F\subset\Omega_{p}^{1}\left(X\right)$ is a closed subspace.%
} of $\Omega_{p}^{1}\left(X\right)$. 

Section \ref{sec-WTN-symplectic-bundles} is devoted to the WTN for
Lagrangian subbundles which is proved in Theorem \ref{thm-WTN-for-lag-subbundle}.
The idea of the construction is the following. A Lagrangian subbundle
$L\subset X$ of a symplectic bundle $p:X\rightarrow B$ can be viewed
as a smooth family $\left(L_{b}\right)_{b\in B}$ of Lagrangian submanifolds
of the symplectic manifolds $X_{b}$. The idea is to construct a family
of (usual) Weinstein tubular neighbourhoods $U_{b}\rightarrow T^{*}L_{b}$
of $L_{b}$. To insure the smoothness with respect to $b\in B$ of
such a family of WTN, one needs a way to fix a particular WTN for
each $b$, since they are not unique. This is done by observing that
for each $b,$ the Weinstein tubular neighbourhoods $U_{b}\rightarrow T^{*}L_{b}$
are in $1:1$ correspondence with Lagrangian foliations transverse
to $L_{b}$, which are themselves in $1:1$ correspondence with Liouville
forms vanishing on $L_{b}$. The corresponding results for symplectic
bundles are explained in Lemma \ref{lem-exist-transverse-pola} and
Lemma \ref{lem-existe-liouville-form}. Then, the fibred WTN Theorem
is used to prove in Theorem \ref{thm-lag-subbundle-sous-varioche}
that $\mbox{Subb}_{\mbox{Lag}}\left(X\right)$ is a Fréchet submanifold
of $\mbox{Subb}\left(X\right)$. Finally, Section \ref{sec-lag-fibrations}
is devoted to the precise description of the relation between Lagrangian
subbundles of symplectic bundles on the one hand and Lagrangian fibrations
of symplectic manifolds on the other hand. The last result of this
article is Theorem \ref{thm-fib_lag-submanifold-of-fib} which proves
that $\mbox{Fib}_{\mbox{Lag}}\left(M,B\right)$ is a Fréchet submanifold
of $\mbox{Fib}\left(M,B\right)$.

\section{\label{sec:fibred-differential-forms}fibred differential forms}

\subsection{Definitions}

We give here the formal definitions which allows us to speak about
smooth families of objects living on different manifolds. The key
idea is of course to view the parameter space of the families as the
base space of a fibration. Throughout this section we fix smooth connected
manifolds $X$ and $B$, the latter being called the base, and a fibration
(i.e., a locally trivial fibre bundle structure) $p:X\rightarrow B$. 

The collection of $V_{p}\left(x\right)=\ker Dp\left(x\right)$ for
all $x\in X$ defines a vector subbundle of $TX$ over $X$, which
we call the \textbf{$p$-vertical tangent bundle} and denote by $V_{p}$. 
\begin{defn}
\label{def-vertical-vector-fields}The sections of $V_{p}$ are called
\textbf{vertical vector fields }and the space of these sections is
denoted by $\frak{X}_{p}\left(X\right)$.
\end{defn}
Then, for any integer $k$, we define the bundle $\bigwedge^{k}V_{p}^{*}$
of totally antisymmetric $k$-linear forms on $V_{p}$, which is also
a vector bundle over $X$. In particular $V_{p}^{*}$ is called the
\textbf{$p$-vertical cotangent bundle}. 
\begin{defn}
\label{def-fibred-forms}The space of smooth sections $\Omega_{p}^{k}\left(X\right)=\Gamma\left(X,\bigwedge^{k}V_{p}^{*}\right)$
is called the space of \textbf{$p$-fibred $k$-forms}.
\end{defn}
Notice that given such a fibred $k$-form $\alpha\in\Omega_{p}^{k}\left(X\right)$,
its restriction $\alpha_{b}$ to any of the fibres $p^{-1}\left(b\right)$,
$b\in B$, is a $k$-form in the usual sense on this fibre. 

In any local trivialisation of $p$, such an $\alpha$ appears as
a smooth family of differential forms on the typical fibre (see below).
fibred differential forms are therefore the right notion that makes
precise the intuitive idea of a smooth family of differential forms
not living on the same manifold.

In the sequel it will be convenient to denote by $X_{b}$ the fibre
$p^{-1}\left(b\right)$ and $\alpha_{b}\in\Omega^{k}\left(X_{b}\right)$
the restriction to $X_{b}$ of a fibred $k$-form $\alpha$.

\subsection{\label{sub-equivalence-with-families}Equivalence with families of
differential forms}

As mentioned above, to any fibred form $\alpha\in\Omega_{p}^{k}\left(X\right)$
we associate the family of restrictions $\left(\alpha_{b}\right)_{b\in B}$,
with $\alpha_{b}\in\Omega^{k}\left(X_{b}\right)$. It is natural to
interpret it as a section of the vector bundle \[
W_{p}^{k}\left(X\right)=\bigcup_{b\in B}\Omega^{k}\left(X_{b}\right).\]
This space is indeed a Fréchet vector bundle over the finite dimensional
manifold $B$. This is shown by finding local trivialisations. For
this, we use a local trivialisation of the bundle $X$, i.e., diffeomorphisms
$\phi:O\times X_{b_{0}}\rightarrow p^{-1}\left(O\right)$, with $O\subset B$
an open set and $b_{0}\in O$, which sends the fibre $\left\{ b\right\} \times X_{b_{0}}$
to the fibre $p^{-1}\left(b\right)$. This gives a family $\phi_{b}=\phi\left(b,.\right)$
of diffeomorphisms from $X_{b_{0}}$ to $X_{b}$, which provides through
$\phi_{b}^{*}:\Omega^{k}\left(X_{b}\right)\rightarrow\Omega^{k}\left(X_{b_{0}}\right)$
a local trivialisation of the vector bundle $W_{p}^{k}\left(X\right)$.
Moreover, the family $\left(\alpha_{b}\right)_{b\in B}$ is a smooth
section of this bundle. This is the way one can interpret any fibred
form $\alpha\in\Omega_{p}^{k}\left(X\right)$ as a smooth family of
forms on the fibres $X_{b}$.

\subsection{Fibred de Rham differential }

For any fibred form $\alpha\in\Omega_{p}^{k}\left(X\right)$ we can
define its exterior derivative $d_{p}\alpha$ which is an element
of $\Omega_{p}^{k+1}\left(X\right)$ defined by the familiar-looking
formula \begin{eqnarray*}
\left(d_{p}\alpha\right)\left(v_{0},...,v_{k}\right) & = & \sum_{i=0}^{k}\left(-1\right)^{i}v_{i}\left[\alpha\left(v_{0},...,\hat{v}_{i},...,v_{k}\right)\right]\\
 &  & +\sum_{0\leq i<j\leq k}\left(-1\right)^{i+j}\alpha\left(\left[v_{i},v_{j}\right]v_{0},...,\hat{v}_{i},...,\hat{v}_{j},...,v_{k}\right)\end{eqnarray*}
for any vertical vector fields $v_{0},...,v_{k}\in\frak{X}_{p}\left(X\right)$.
This is almost the definition for usual $k$-forms, except that we
restrict $p$-vertical tangent vectors, i.e., sections of $V_{p}$.
This definition makes sense because the Lie bracket $\left[v_{i},v_{j}\right]$
of two such vector fields is itself a section of $V_{p}$. 

The operators $d_{p}$ give rise to a complex \[
C^{\infty}\left(X\right)=\Omega_{p}^{0}\left(X\right)\overset{d_{p}}{\longrightarrow}\Omega_{p}^{1}\left(X\right)...\overset{d_{p}}{\longrightarrow}\Omega_{p}^{n}\left(X\right)\longrightarrow0\]
 and allow to define the spaces of \textbf{fibrewise closed forms
}$\mathcal{Z}_{p}^{k}\left(X\right)=\ker\left(\left.d_{p}\right|_{\Omega_{p}^{k}\left(X\right)}\right)$
and \textbf{fibrewise exact forms }$\mathcal{B}_{p}^{k}\left(X\right)=d_{p}\left(\Omega_{p}^{k-1}\left(X\right)\right)$.

It follows from the definition of $d_{p}$ that, given a fibred form
$\alpha\in\Omega_{p}^{k}\left(X\right)$, the restriction of $d_{p}\alpha$
to any fibre $X_{b}$ is nothing but the usual exterior derivative
of the restriction $\alpha_{b}$, and therefore $d_{p}\alpha$ can
be viewed as the family $\left(d\alpha_{b}\right)_{b\in B}$. In particular,
a fibred form $\alpha\in\Omega_{p}^{k}\left(X\right)$ is closed if
and only if the restrictions $\alpha_{b}\in\Omega^{k}\left(X_{b}\right)$
are closed for all $b\in B$. This justifies the name of the space
$\mathcal{Z}_{p}^{k}\left(X\right)$. 

On the other hand, a exact fibred form $\alpha=d_{p}\beta$ gives
rise to a smooth family of exact forms $\alpha_{b}=d\beta_{b}$. But
the converse is less obvious. Namely, if $\alpha\in\Omega_{p}^{k}\left(X\right)$
is such that all restrictions $\alpha_{b}\in\Omega^{k}\left(X_{b}\right)$
are exact, it is not clear that there exists a smooth family of primitives
$\beta_{b}\in\Omega^{k-1}\left(X_{b}\right)$. This is actually true,
but one needs for this an adapted version of Hodge theory for fibrations,
as explained in the next section.

\subsection{Fibred Hodge decomposition%
\footnote{The results of this section may be old folklore, but we could not
find a precise reference and decided to reproduce them here.%
}}

On a given compact manifold $M$, the usual Hodge theory provides
a way to construct a smooth family of primitives $\beta_{t}\in\Omega^{k-1}\left(M\right)$
for a given smooth family of forms $\alpha_{t}\in\Omega^{k}\left(M\right)$
which are known a priori to be exact for each $t$. It shows at the
same time that the subspace $Z^{1}\left(M\right)$ of closed $k$-forms
is (topologically) complemented%
\footnote{We remind the reader that beyond Hilbert spaces, a closed linear subspace
is not automatically complemented.%
} in $\Omega^{k}\left(M\right)$, i.e., there exists another subspace
$F\subset\Omega^{k}\left(M\right)$ such that $\Omega^{k}\left(M\right)\cong Z^{1}\left(M\right)\oplus F$.
In the context of fibred forms, we have similar results.

For that purpose, it will be convenient to introduce the following
spaces. First of all, the space $W_{p}^{k}\left(X\right)$ defined
in Section \ref{sub-equivalence-with-families} is, as explained above,
a smooth Fréchet bundle over $B$, whose fibres are $\Omega^{k}\left(X_{b}\right)$,
$b\in B$, and we have the natural identification $\Omega_{p}^{k}\left(X\right)=\Gamma\left(B,W_{p}^{k}\right)$.
Collecting the differentials $\Omega^{k}\left(X_{b}\right)$ for all
$b$, one constructs a smooth map, denoted by the same symbol, $d:W_{p}^{k}\left(X\right)\rightarrow W_{p}^{k+1}\left(X\right)$,
which lifts the identity on $B$. On the other hand, we see easily
from the local trivialisations $\left.W_{p}^{k}\left(X\right)\right|_{O}\rightarrow O\times\Omega^{k}\left(X_{b_{0}}\right)$,
where $O\subset B$ and $b_{0}\in O$, explained in Section \ref{sub-equivalence-with-families},
that this map simply corresponds in the trivialisation to the usual
differential on $\Omega^{k}\left(X_{b_{0}}\right)$. This implies
that the following vector bundles \[
Z_{p}^{k}\left(X\right)=\bigcup_{b\in B}Z^{k}\left(X_{b}\right)\,\,\mbox{and}\,\, B_{p}^{k}\left(X\right)=\bigcup_{b\in B}B^{k}\left(X_{b}\right)\]
 over $B$, whose fibres over $b$ are respectively the spaces of
closed forms and of exact forms on $X_{b}$, are actually smooth vector
subbundles of $W_{p}^{k}$. 

Moreover, the differential $d_{p}$ acting on sections of $W_{p}^{k}\left(X\right)$
is just the left composition by $d:W_{p}^{k}\left(X\right)\rightarrow W_{p}^{k+1}\left(X\right)$.
This shows in particular the identification \[
\mathcal{Z}_{p}^{k}\left(X\right)=\Gamma\left(B,Z_{p}^{k}\left(X\right)\right).\]
To proceed further, we need a kind of Hodge theory adapted to our
context of fibrations. 
\begin{lem}
\label{lem-hodge-inverse-d}Assume $B$ is compact. For each $k$,
there exists a smooth vector bundle morphism $\delta:W_{p}^{k+1}\left(X\right)\rightarrow W_{p}^{k}\left(X\right)$,
such that \[
d\circ\delta\circ d=d.\]
In other words, $\delta$ inverts $d:W_{p}^{k}\left(X\right)\rightarrow B_{p}^{k+1}\left(X\right)$
from the right. \end{lem}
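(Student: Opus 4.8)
The plan is to produce $\delta$ fibrewise by classical Hodge theory and then to check that the construction depends smoothly on $b\in B$. First I would equip the vertical bundle $V_{p}\to X$ with a Riemannian metric (which exists by a partition of unity on $X$); its restriction to each fibre yields a Riemannian metric $g_{b}$ on the compact manifold $X_{b}$, depending smoothly on $b$. For each $b$, Hodge theory on the compact Riemannian manifold $(X_{b},g_{b})$ provides the codifferential $d_{b}^{\ast}$, the harmonic projection $H_{b}$ and the Green operator $G_{b}$; the latter commutes with $d$ and $d_{b}^{\ast}$ and satisfies $\mathrm{Id}=H_{b}+dd_{b}^{\ast}G_{b}+d_{b}^{\ast}dG_{b}$ on $\Omega^{\bullet}(X_{b})$. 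I then set $\delta_{b}:=d_{b}^{\ast}G_{b}\colon\Omega^{k+1}(X_{b})\to\Omega^{k}(X_{b})$ and let $\delta$ be the collection $(\delta_{b})_{b\in B}$.

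The fibrewise identity $d\circ\delta_{b}\circ d=d$ is then a short computation. Given $\omega\in\Omega^{k}(X_{b})$, the form $\eta:=d\omega$ is exact, hence closed and $L^{2}$-orthogonal to the harmonic forms, so $H_{b}\eta=0$; moreover $dG_{b}\eta=G_{b}d\eta=0$ since $G_{b}$ commutes with $d$ and $d\eta=0$, so the coexact term $d_{b}^{\ast}dG_{b}\eta$ vanishes as well. The Hodge identity thus reduces to $\eta=dd_{b}^{\ast}G_{b}\eta=d\,\delta_{b}\eta$, that is $d\,\delta_{b}(d\omega)=d\omega$. This is exactly $d\circ\delta_{b}\circ d=d$ on each fibre, and it simultaneously exhibits $\delta_{b}$ as a right inverse of $d$ on the exact forms, as claimed.

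The heart of the matter, and the step I expect to be the main obstacle, is to show that $\delta=(\delta_{b})_{b}$ is a smooth morphism of the Fr\'echet bundles $W_{p}^{k+1}(X)\to W_{p}^{k}(X)$. Working in a local trivialisation $\phi\colon O\times X_{b_{0}}\to p^{-1}(O)$ as in Section~\ref{sub-equivalence-with-families}, everything is pulled back to the fixed compact manifold $N=X_{b_{0}}$, so that $\delta$ becomes a family of operators on $\Omega^{\bullet}(N)$ governed by the smooth family of metrics $\phi_{b}^{\ast}g_{b}$. The dependence of $d_{b}^{\ast}$ on $b$ is algebraic in the metric and hence manifestly smooth, so the difficulty is concentrated in $G_{b}$ and $H_{b}$. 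Since $B$ is connected, $\dim\ker\Delta_{b}=\dim\mathcal{H}^{k}(X_{b})$ equals the Betti number $b_{k}(N)$ and is therefore constant, so $0$ remains an isolated eigenvalue of the elliptic operator $\Delta_{b}$ of constant multiplicity. One can then represent the projection and the Green operator as contour integrals of the resolvent, $H_{b}=\frac{1}{2\pi i}\oint_{\gamma}(\lambda-\Delta_{b})^{-1}\,d\lambda$ and $G_{b}=\frac{1}{2\pi i}\oint_{\gamma'}\lambda^{-1}(\lambda-\Delta_{b})^{-1}\,d\lambda$, over fixed small contours separating $0$ from the rest of the spectrum (valid uniformly for $b$ near $b_{0}$). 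Smooth dependence of $(\lambda-\Delta_{b})^{-1}$ on $b$ on the Sobolev completions, together with elliptic regularity to return to the $C^{\infty}$-topology, yields the required smoothness of $G_{b}$, and hence of $\delta_{b}$.

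Finally, I note that since the relation $d\,\delta_{b}\,d=d$ is \emph{affine} in $\delta_{b}$, local solutions built in distinct trivialisations may be glued by a partition of unity on $B$ without destroying the identity: if $(\rho_{i})$ is subordinate to a trivialising cover and each $\delta^{(i)}$ satisfies the relation over $O_{i}$, then $\delta=\sum_{i}(\rho_{i}\circ p)\,\delta^{(i)}$ still satisfies $d\,\delta\,d=\sum_{i}(\rho_{i}\circ p)\,d=d$. Compactness of $B$ guarantees a finite such cover and the uniform spectral control needed to carry out the resolvent estimates, which is where the hypothesis enters.
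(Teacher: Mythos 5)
Your proof is correct, but it takes a genuinely different and analytically heavier route than the paper. You build the honest fibrewise Hodge operator $\delta_{b}=d_{b}^{*}G_{b}$ from a smoothly varying family of metrics, which forces you to confront the real difficulty head-on: smooth dependence of the Green operator $G_{b}$ on $b$, handled via constancy of the Betti numbers, resolvent contour integrals and elliptic regularity. The paper avoids parametrized elliptic theory entirely: it performs Hodge theory only on \emph{one fixed reference fibre} $X_{b_{i}}$ per trivialising chart, obtaining a single operator $\delta_{i}$ with $d\delta_{i}d=d$, transports it to $\left.W_{p}^{k}\left(X\right)\right|_{B_{i}}$ by the trivialisation $\Phi_{i}$ (which commutes with $d$, so the relation is preserved), and glues with a partition of unity on $B$ -- exactly the affineness observation you relegate to your final paragraph, but used there as the \emph{entire} mechanism rather than as an afterthought. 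Smoothness in $b$ is then automatic, since the only $b$-dependence is through $\Phi_{i}$ and $\rho_{i}\left(b\right)$. What each approach buys: yours produces the canonical metric-dependent right inverse (whose associated projection $\delta_{p}d_{p}$ is the genuine fibrewise orthogonal one), at the price of the spectral-perturbation argument, which is standard but far from free and whose details (e.g.\ the precise contour representation of $G_{b}$ and the passage from Sobolev to $C^{\infty}$ smoothness) you only sketch; the paper's $\delta$ is non-canonical but obtained by entirely soft arguments, which is all the lemma needs, since only the existence of \emph{some} smooth right inverse is used downstream in Theorem \ref{lem-fibred-hodge-decomposition}.
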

\begin{proof}
First of all, we cover the base space $B$ by open subsets $B_{i}\subset B$,
$i\in I$, such that $W_{p}^{k}\left(X\right)$ is trivial over each
$B_{i}$, with trivialisations of the form $\Phi_{i}:\left.W_{p}^{k}\left(X\right)\right|_{B_{i}}\rightarrow B_{i}\times\Omega^{k}\left(X_{b_{i}}\right)$,
where $b_{i}\in O$, as explained in Section \ref{sub-equivalence-with-families}.
We can assume that the cover is finite, since $B$ is compact. For
convenience, we also denote by $\tilde{\Phi}_{i}$ the second component
of $\Phi_{i}$.

Now, for each $i\in I$ we know from the standard Hodge theory, that
there exists a map%
\footnote{For example take $\delta_{i}=Gd^{*}$ where $G$ is the green operator
of the Laplacian and $d^{*}$ is the formal $L^{2}$-adjoint of $d$.%
} $\delta_{i}:\Omega^{k+1}\left(X_{b_{i}}\right)\rightarrow\Omega^{k}\left(X_{b_{i}}\right)$
which satisfies $d\delta_{i}d=d$. Then, let $\rho_{i}$ be a partition
of unity subordinate to the $B_{i}$'s, and let us define $\delta$
as follows. For any $\alpha\in W_{p}^{k}\left(X\right)$ above a point
$b\in B$ set \[
\delta\left(\alpha\right)=\sum_{i\in I}\rho_{i}\left(b\right)\Phi_{i}^{-1}\left(b,\delta_{i}\left(\tilde{\Phi}_{i}\left(\alpha\right)\right)\right).\]
 This is a smooth map since the $\rho_{i}$ have compact support in
$B_{i}$, precisely where the $\Phi_{i}$'s are defined. Moreover,
since the maps $\Phi_{i}$ preserve the differential, one has for
any $\alpha\in W_{p}^{k}\left(X\right)$ over $b$ \begin{eqnarray*}
d\circ\delta\circ d\left(\alpha\right) & = & d\left(\sum_{i\in I}\rho_{i}\left(b\right)\Phi_{i}^{-1}\left(b,\delta_{i}\left(\tilde{\Phi}_{i}\left(d\alpha\right)\right)\right)\right)\\
 & = & \sum_{i\in I}\rho_{i}\left(b\right)\Phi_{i}^{-1}\left(b,d\delta_{i}d\left(\tilde{\Phi}_{i}\left(\alpha\right)\right)\right).\end{eqnarray*}
Then, using $d\delta_{i}d=d$ on each $\Omega^{k+1}\left(X_{b_{i}}\right)$
and the partition of unity, one gets easily $d\circ\delta\circ d=d$
on $W_{p}^{k}\left(X\right)$.
\end{proof}
This result transposes easily to the space of sections of $W_{p}^{k}\left(X\right)$.
Indeed, if we denote by $\delta_{p}:\Omega_{p}^{k+1}\left(X\right)\rightarrow\Omega_{p}^{k}\left(X\right)$
the left composition by $\delta$, we obtain immediately \[
d_{p}\delta_{p}d_{p}=d_{p}.\]
It follows from this fact that, given a fibred form $\alpha\in\Omega_{p}^{k}\left(X\right)$
such the restriction $\alpha_{b}$ is exact for all $b$, then $\delta_{p}\alpha\in\Omega_{p}^{k+1}\left(X\right)$
is a smooth family of primitives. 

On the other hand, the map $P=\delta_{p}d_{p}$ is a continuous linear
map from $\Omega_{p}^{k}\left(X\right)$ to itself. It is obviously
a projection $P^{2}=P$ and moreover its kernel is exactly $\mathcal{Z}_{p}^{k}\left(X\right)$.
Indeed, if $P\alpha=0$ this implies that $d_{p}\alpha$ vanishes
since $\delta_{p}$ is injective when restricted on $\mathcal{B}_{p}^{k}\left(X\right)$.
This shows $\ker P\subset\mathcal{Z}_{p}^{k}\left(X\right)$ and the
other inclusion is trivial. Therefore, the map $1-P$ is a continuous
projection of $\Omega_{p}^{k}\left(X\right)$ onto $\mathcal{Z}_{p}^{k}\left(X\right)$.
This is equivalent to the following.
\begin{thm}
\label{lem-fibred-hodge-decomposition}Assume $B$ is compact. The
set of fibrewise closed forms $\mathcal{Z}_{p}^{k}\left(X\right)$
is a closed and complemented subspace of $\Omega_{p}^{k}\left(X\right)$. 
\end{thm}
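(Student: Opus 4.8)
The plan is to read the assertion off directly from the continuous projection already constructed in the discussion preceding the statement, invoking only the elementary theory of idempotents on a topological vector space. Recall that $P=\delta_{p}d_{p}$ was shown to be a continuous map with $P^{2}=P$ and $\ker P=\mathcal{Z}_{p}^{k}\left(X\right)$, so that $Q:=1-P$ is a continuous projection of $\Omega_{p}^{k}\left(X\right)$ whose image is exactly $\mathcal{Z}_{p}^{k}\left(X\right)$. Everything then reduces to unwinding the standard consequences of this fact.

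First I would settle closedness. For any continuous linear projection $Q$ on a topological vector space one has $\mathrm{im}\,Q=\ker\left(1-Q\right)$, since $x\in\mathrm{im}\,Q$ if and only if $Qx=x$, i.e. $\left(1-Q\right)x=0$. As $1-Q=P$ is continuous, $\ker P$ is closed, being the preimage of $\left\{0\right\}$; hence $\mathcal{Z}_{p}^{k}\left(X\right)=\mathrm{im}\,Q=\ker P$ is a closed subspace of $\Omega_{p}^{k}\left(X\right)$. This is moreover consistent with the earlier identification $\mathcal{Z}_{p}^{k}\left(X\right)=\Gamma\left(B,Z_{p}^{k}\left(X\right)\right)$ through the closed subbundle $Z_{p}^{k}$.

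Next I would produce the complement. Put $F:=\mathrm{im}\,P=\ker Q$. Every $\alpha\in\Omega_{p}^{k}\left(X\right)$ splits as $\alpha=Q\alpha+P\alpha$ with $Q\alpha\in\mathcal{Z}_{p}^{k}\left(X\right)$ and $P\alpha\in F$, and the splitting is algebraically direct because $\mathcal{Z}_{p}^{k}\left(X\right)\cap F=\mathrm{im}\,Q\cap\ker Q=\left\{0\right\}$ (if $Qx=x$ and $Qx=0$ then $x=0$). The assignment $\alpha\mapsto\left(Q\alpha,P\alpha\right)$ is continuous into $\mathcal{Z}_{p}^{k}\left(X\right)\times F$, with continuous inverse $\left(u,v\right)\mapsto u+v$; thus the algebraic direct sum is a topological one, giving $\Omega_{p}^{k}\left(X\right)\cong\mathcal{Z}_{p}^{k}\left(X\right)\oplus F$ with $F$ closed as $\ker Q$. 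This is precisely the notion of complemented subspace recorded in the Introduction, so both halves of the theorem are established.

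Since every ingredient is already available from Lemma \ref{lem-hodge-inverse-d} and the identity $d_{p}\delta_{p}d_{p}=d_{p}$, there is no genuine obstacle: the theorem is a formal corollary of the existence of the continuous projection $1-P$. The only point that deserves a word of care is that \emph{complemented} is meant in the topological sense, so one must check the continuity of the inverse map $\left(u,v\right)\mapsto u+v$ and not merely the algebraic splitting; but this continuity is immediate from the continuity of addition in a topological vector space.
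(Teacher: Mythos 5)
Your proposal is correct and follows essentially the same route as the paper: the paper also derives the theorem directly from the continuous projection $1-P$ with $P=\delta_{p}d_{p}$ constructed via Lemma \ref{lem-hodge-inverse-d}, stating that the existence of this continuous projection onto $\mathcal{Z}_{p}^{k}\left(X\right)$ ``is equivalent to'' the theorem. You merely spell out that equivalence (closedness of $\ker P$ and the topological splitting $\Omega_{p}^{k}\left(X\right)\cong\ker P\oplus\operatorname{im}P$), which is exactly the intended argument.
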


\subsection{fibred diffeomorphisms }
\begin{defn}
Let $p:X\rightarrow B$ and $q:Y\rightarrow B$ two fibrations over
the same base manifold. A smooth map $\varphi:X\rightarrow Y$ is
called \textbf{vertical} whenever $q\circ\varphi=p$, i.e., when $\varphi$
is a bundle morphism.
\end{defn}
Such a $\varphi$ induces a smooth family of smooth maps $\varphi_{b}:X_{b}\rightarrow Y_{b}$.
It is then easy to check that it makes sense to define the pullback
operator $\varphi^{*}:\Omega_{q}^{k}\left(Y\right)\rightarrow\Omega_{p}^{k}\left(X\right)$
through $\left(\varphi^{*}\alpha\right)_{b}=\varphi_{b}^{*}\alpha_{b}$.
In particular, when $X=Y$, we can associate to any one-parameter
family of vertical diffeomorphisms $\phi^{t}$ a (time-dependent)
vertical vector field $Z_{t}$ and one gets easily convinced that
for any fibred form $\alpha\in\Omega_{p}^{k}\left(X\right)$, one
has \[
\frac{d}{dt}\left(\phi_{Z_{t}}^{t}\right)^{*}\alpha=\left(\phi_{Z_{t}}^{t}\right)^{*}\mathcal{L}_{Z_{t}}\alpha\]
where $\mathcal{L}_{Z_{t}}\alpha$ is the fibrewise Lie derivative,
defined by $\left(\mathcal{L}_{Z_{t}}\alpha\right)_{b}=\mathcal{L}_{Z_{t}}\alpha_{b}$,
with $Z_{t}$ viewed as a vector field on the fibre $X_{b}$. In particular
it satisfies the fibred Cartan formula : $\mathcal{L}_{Z}\alpha=Z\lrcorner d_{p}\alpha+d_{p}\left(Z\lrcorner\alpha\right)$
for any $Z\in\frak{X}_{p}\left(X\right)$ and $\alpha\in\Omega_{p}^{k}\left(X\right)$.

\section{\label{sec-WTN-symplectic-bundles}WTN in symplectic bundles}

\subsection{\label{sub-symplectic-bundles}Symplectic bundles and Lagrangian
subbundles}
\begin{defn}
\label{def-symplectic-bundle}A \textbf{symplectic bundle} is a (locally
trivial) fibration $p:X\rightarrow B$ equipped with a fibred $2$-form
$\omega\in\Omega_{p}^{2}\left(X\right)$, such that the restrictions
$\omega_{b}\in\Omega^{2}\left(X_{b}\right)$ are symplectic for all
$b\in B$, i.e., closed and non-degenerate. 
\end{defn}
In particular $d_{p}\omega=0$, i.e., $\omega\in\mathcal{Z}_{p}^{2}\left(X\right)$.
Notice that the restrictions $\omega_{b}$ may not be diffeomorphic
to each other.
\begin{defn}
\label{def-lag-subbundle}A subbundle $L\subset X$ is called \textbf{Lagrangian
}when the restrictions $L_{b}=X_{b}\cap L$ are Lagrangian in $X_{b}$
for all $b\in B$. 
\end{defn}
The following canonical example is precisely the local model for neighbourhoods
of Lagrangian subbundles, as shown in Section \ref{sub-WTN-lag-subbundle}.
\begin{example}
\label{exa-fibred-cotangent-bundle}Let $p:M\rightarrow B$ be a fibration.
The vector bundle $X=V_{p}^{*}\left(M\right)\overset{\pi}{\rightarrow}M$
dual to the $p$-vertical tangent bundle of $M$ is in fact a symplectic
bundle over $B$, with projection $p\circ\pi$. Indeed, for any $b\in B$
the fibre $\left(p\circ\pi\right)^{-1}\left(b\right)$ is nothing
but the cotangent bundle $T^{*}M_{b}$ of $M_{b}=p^{-1}\left(b\right)$,
hence in particular a symplectic manifold.\end{example}
\begin{defn}
\label{def:fibred-Liouville-form}A fibred Liouville form on $X$
is a $1$-form $\lambda\in\Omega_{p}^{1}\left(X\right)$ such that
$d_{p}\lambda=\omega$. It corresponds to a family of primitives $\lambda_{b}$
of the symplectic forms $\omega_{b}$.
\end{defn}

\subsection{A fibred Poincaré lemma}

We will make use of the following technical lemma, whose proof can
be found in \cite{humiliere_roy_2}.
\begin{lem}
\label{lem-voisinage-pratique}Let $p:X\rightarrow B$ be a fibre
bundle and $L\subset X$ a subbundle. Then, there exists a tubular
neighbourhood $U$ of $L$, whose associated projection $q:U\rightarrow L$
is $p$-vertical.

\noindent %
\begin{minipage}[t]{1\textwidth}%
\begin{center}
\includegraphics[scale=0.6]{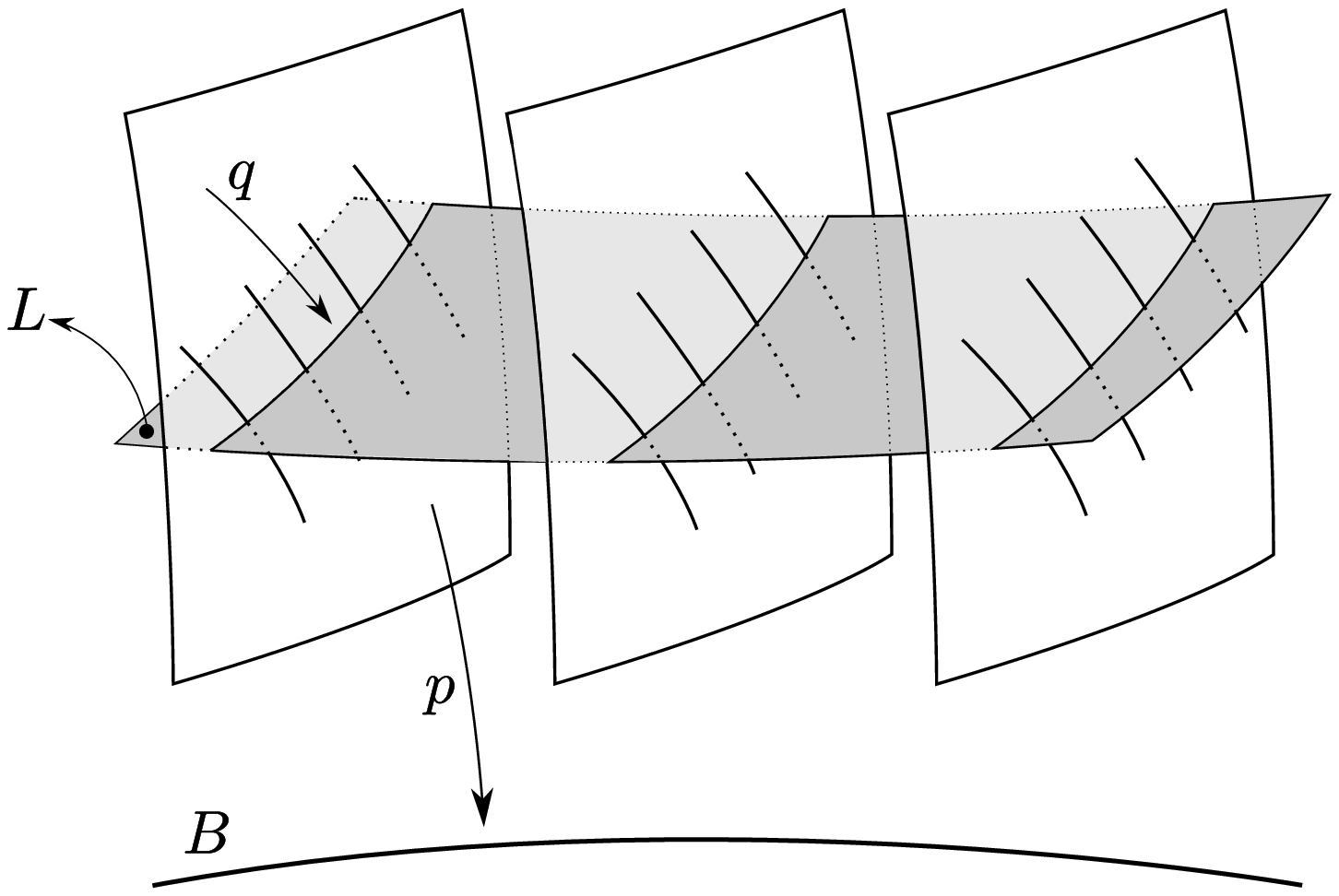}
\par\end{center}%
\end{minipage}
\end{lem}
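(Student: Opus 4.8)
The plan is to build the tubular neighbourhood fibrewise, so that the retraction automatically respects $p$. Writing $p_{L}=p|_{L}\colon L\to B$, the requirement that $q\colon U\to L$ be $p$-vertical means exactly $p_{L}\circ q=p|_{U}$, i.e. the retraction must send each point of $U$ lying over $b\in B$ to a point of $L$ lying over the same $b$; equivalently, $q$ must restrict in each fibre $X_{b}$ to a tubular projection of $L_{b}$ in $X_{b}$. The natural way to force this is to look for a normal bundle of $L$ that is contained in the vertical tangent bundle $V_{p}$.

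First I would fix a fibre metric on $V_{p}$, i.e. a smooth positive-definite section of $\mathrm{Sym}^{2}V_{p}^{*}$; these exist globally since such sections form a convex set and can be glued by a partition of unity on $X$. Along $L$ the subbundle $V_{p_{L}}=\ker D p_{L}\subset V_{p}|_{L}$ coincides with the vertical tangent bundle of the fibres $L_{b}$, and I set $N$ to be its fibrewise orthogonal complement inside $V_{p}|_{L}$. Because $L$ surjects onto $B$, one has $T_{x}L+V_{p}(x)=T_{x}X$ and $T_{x}L\cap V_{p}(x)=T_{x}L_{b}$; a dimension count then shows that $N_{x}$ is in fact a complement of $T_{x}L$ in the full tangent space $T_{x}X$, while by construction $N\subset V_{p}$.

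Next I would assemble the fibrewise exponential maps $\exp^{b}$ of the induced metrics on the fibres $X_{b}$ into a single smooth map $\mathrm{Exp}\colon V_{p}\to X$, given by $\mathrm{Exp}(v)=\exp^{p(x)}_{x}(v)$ for $v\in V_{p}(x)$; this map is fibre-preserving, and its smoothness on the total space is visible in local trivialisations of $p$. Restricting to $N$ yields $\Psi=\mathrm{Exp}|_{N}\colon N\to X$, which satisfies $p\circ\Psi=p_{L}\circ\pi_{N}$ (with $\pi_{N}$ the bundle projection of $N$) and whose differential at the zero section is the identity of $T_{x}L\oplus N_{x}=T_{x}X$, hence is a local diffeomorphism near the zero section. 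Setting $q=\pi_{N}\circ\Psi^{-1}$ on the image then gives $p_{L}\circ q=p_{L}\circ\pi_{N}\circ\Psi^{-1}=p\circ\Psi\circ\Psi^{-1}=p|_{U}$, so $q$ is $p$-vertical, as desired.

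The main obstacle is the final passage from this fibrewise/local statement to a genuine tubular neighbourhood: $\Psi$ is a priori only a local diffeomorphism near the zero section, and one must shrink to an open neighbourhood on which it is a diffeomorphism onto an open set $U\subset X$. This is the standard tubular-neighbourhood shrinking argument, using that $L$ is a closed submanifold of $X$ (which holds since $L$ is a subbundle) to guarantee injectivity near $L$. It goes through here because the whole construction was fibre-preserving from the start; the only genuine care needed is to carry out the shrinking smoothly in $b\in B$, which can be arranged by a partition of unity on $B$ or, when $B$ is compact, uniformly.
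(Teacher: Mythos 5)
Your argument is correct, and it is the natural (and almost certainly the intended) proof: the paper itself gives no proof of this lemma, deferring it to the reference \cite{humiliere_roy_2}, so there is nothing in the text to compare against line by line. Choosing a fibre metric on $V_{p}$, taking $N$ to be the orthogonal complement of $V_{p_{L}}$ inside $V_{p}|_{L}$ (your dimension count and the verification that $N_{x}\oplus T_{x}L=T_{x}X$ are exactly right, using that $p|_{L}$ is a submersion), and exponentiating fibrewise produces a fibre-preserving local diffeomorphism near the zero section, after which the usual shrinking argument applies; the only imprecision is notational ($\mathrm{Exp}$ is defined only on a neighbourhood of the zero section of $V_{p}$, and the shrinking is done over $L$ rather than over $B$), and neither affects the validity of the proof.
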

We now give a Poincaré Lemma for fibred forms.
\begin{lem}
\label{lem-existe-liouville-form}Let $L\subset X$ a Lagrangian subbundle
of a symplectic bundle $p:X\rightarrow B$. There exist a tubular
neighbourhood $U$ of $L$ and a fibred Liouville form $\lambda\in\Omega_{p}^{1}\left(U\right)$
vanishing on $L$. \end{lem}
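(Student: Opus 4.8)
The plan is to run the classical first step of Weinstein's theorem---the construction of a Liouville form vanishing on a Lagrangian---simultaneously in all fibres, using the fibred calculus already set up. The only point requiring care is the smooth dependence on $b\in B$, and this is exactly what the $p$-vertical tubular neighbourhood of Lemma \ref{lem-voisinage-pratique} is designed to supply.

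First I would invoke Lemma \ref{lem-voisinage-pratique} to obtain a tubular neighbourhood $U$ of $L$ together with a $p$-vertical projection $q:U\rightarrow L$. Because $q$ is vertical, the entire tubular neighbourhood structure is compatible with the fibration: for each $b$ it restricts to an honest tubular neighbourhood $q_{b}:U_{b}\rightarrow L_{b}$ of $L_{b}$ inside $X_{b}$, and these vary smoothly with $b$. Identifying $U$ with a neighbourhood of the zero section in the associated fibred normal bundle, fibrewise scalar multiplication by $t\in[0,1]$ defines a smooth family $r_{t}:U\rightarrow U$ of vertical maps with $r_{1}=\mathrm{id}$, with $r_{0}$ equal to the inclusion of $L$ composed with $q$, and with each $r_{t}$ fixing $L$ pointwise. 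Let $Z_{t}\in\mathfrak{X}_{p}(U)$ be the time-dependent vertical vector field generating this family.

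Next I would apply the fibrewise Lie-derivative identity $\frac{d}{dt}(r_{t})^{*}\omega=(r_{t})^{*}\mathcal{L}_{Z_{t}}\omega$ together with the fibred Cartan formula $\mathcal{L}_{Z_{t}}\omega=Z_{t}\lrcorner d_{p}\omega+d_{p}(Z_{t}\lrcorner\omega)$. Since $\omega\in\mathcal{Z}_{p}^{2}(X)$ we have $d_{p}\omega=0$, so $\frac{d}{dt}(r_{t})^{*}\omega=d_{p}\big((r_{t})^{*}(Z_{t}\lrcorner\omega)\big)$, using that $d_{p}$ commutes with vertical pullbacks. Integrating from $0$ to $1$ gives $\omega-(r_{0})^{*}\omega=d_{p}\lambda$ with $\lambda=\int_{0}^{1}(r_{t})^{*}(Z_{t}\lrcorner\omega)\,dt$. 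Now $(r_{0})^{*}\omega=q^{*}(\omega|_{L})$, and this vanishes because $L$ being Lagrangian means exactly that each $\omega_{b}$ restricts to zero on $L_{b}$; hence $d_{p}\lambda=\omega$ and $\lambda$ is a fibred Liouville form. The vanishing of $\lambda$ on $L$ is automatic: at a point $x\in L$ the map $r_{t}$ is stationary, so $Z_{t}(x)=\frac{d}{dt}r_{t}(x)=0$, and therefore the integrand vanishes along $L$.

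The main obstacle is the smoothness of the integral defining $\lambda$, in two respects. Along the fibres, the generating field $Z_{t}$ blows up like $\tfrac1t$ as $t\rightarrow0$ (it is $\tfrac1t$ times the radial Euler field of the scaling), so one must check that the contraction $(r_{t})^{*}(Z_{t}\lrcorner\omega)$ nonetheless extends smoothly across $t=0$; this is the standard point that $Z_{t}\lrcorner\omega$ vanishes to first order on the zero section, which cancels the singularity after pullback by $r_{t}$. Transversally, one must verify that the resulting family $(\lambda_{b})_{b\in B}$ is a genuine element of $\Omega_{p}^{1}(U)$, i.e.\ smooth in $b$; this is guaranteed because every object entering the construction---the tubular neighbourhood, the scaling $r_{t}$, the field $Z_{t}$, and the contraction with $\omega$---is built from the $p$-vertical structure of Lemma \ref{lem-voisinage-pratique} and is therefore smooth on the total space $U$, not merely fibrewise. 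Note that, unlike the fibred Hodge decomposition, this argument is local in $B$ and fibrewise, so no compactness of $B$ is needed.
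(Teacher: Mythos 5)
Your proposal is correct and follows essentially the same route as the paper: a $p$-vertical tubular neighbourhood from Lemma \ref{lem-voisinage-pratique}, the fibrewise dilation as a vertical deformation retraction fixing $L$, and the fibred Cartan formula yielding a homotopy operator whose output $\lambda$ vanishes on $L$ precisely because the retraction is stationary there. The only divergence is technical: the paper decomposes $\rho_{t}=\rho\circ T_{t}\circ\iota$ and contracts with the time-independent field $\partial_{t}$ on $\left[0,1\right]\times U$, which makes the homotopy operator manifestly smooth and sidesteps the $1/t$ singularity of the generating field that your version must argue is removable.
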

\begin{proof}
First of all, one can find a deformation retraction of a neighbourhood
$U\subset X$ of $L$ onto $L$, i.e. a smooth map $\rho:\left[0,1\right]\times U\rightarrow U$,
satisfying $\rho_{0}=\mathbb{I}$, $\rho_{1}\left(U\right)=L$ and
$\left.\rho_{t}\right|_{L}=\mathbb{I}$ for all $t$, where $\rho_{t}:U\rightarrow U$
is defined by $\rho_{t}=\rho\left(t,.\right)$. Moreover, one can
assume that the deformation preserves the fibres of $p$, i.e. $p\circ\rho_{t}=p$
for all $t$. Indeed, take a tubular neighbourhood $U$ whose projection
$\rho$ si $p$-vertical, as in in Lemma \ref{lem-voisinage-pratique},
and use the dilation with a factor depending on $t$, in the vector
bundle associated to this tubular neighbourhood. 

We adapt now to the category of fibred forms a standard argument in
the spirit of Poincaré Lemma, by writing for any $\alpha\in\Omega_{p}^{k}\left(X\right)$
\[
\rho_{1}^{*}\alpha-\rho_{0}^{*}\alpha=\int_{0}^{1}\left(\frac{d}{dt}\rho_{t}^{*}\alpha\right)\, dt.\]
Then, we make use of the decomposition $\rho_{t}=\rho\circ T_{t}\circ\iota$
where $\iota:U\rightarrow\mathbb{R}\times U$ is the injection $\iota\left(x\right)=\left(0,x\right)$
and $T_{t}:\mathbb{R}\times U\rightarrow\mathbb{R}\times U$ is the
translation $T_{t}\left(s,x\right)=\left(s+t,x\right)$. Viewing $\mathbb{R}\times U$
as a fibration over $B$ in the obvious way $\left(t,x\right)\mapsto p\left(x\right)$,
we see that the three smooth maps $\rho$, $T_{t}$ and $\iota$ are
vertical. Therefore, we have $\frac{d}{dt}\rho_{t}^{*}\alpha=\iota^{*}T_{t}^{*}\mathcal{L}_{\partial_{t}}\rho^{*}\alpha$,
where $\partial_{t}$ denotes the (time-independent) vector field
of the one-parameter group of diffeomorphisms $T_{t}$. Then, applying
the fibred Cartan formula, we finally obtain\[
\rho_{1}^{*}\alpha-\rho_{0}^{*}\alpha=Pd_{p}\alpha+d_{p}P\alpha,\]
 where the homotopy operator $P:\Omega_{p}^{\bullet}\left(X\right)\rightarrow\Omega_{p}^{\bullet-1}\left(X\right)$
is defined by \[
P\beta=\int_{0}^{1}\iota^{*}T_{t}^{*}\partial_{t}\lrcorner\rho^{*}\beta\, dt.\]
 Moreover, by construction $P\beta$ vanishes on $L$. Indeed, to
compute $P\beta$ at a point $x\in L$, we need to evaluate $\partial_{t}\lrcorner\rho^{*}\beta$
at $\left(t,x\right)$ for all $t\in\left[0,1\right]$, or equivalently
$D_{\left(t,x\right)}\rho\left(\partial_{t}\right)\lrcorner\beta\left(\rho\left(t,x\right)\right)$.
But the fact that $\rho\left(t,x\right)=x$ on $L$ implies that $D_{\left(t,x\right)}\rho\left(\partial_{t}\right)$
vanishes for each $t$, therefore $P\beta$ vanishes on $L$. 

Finally, we apply this Poincaré Lemma to the fibred symplectic form
$\omega\in\Omega_{p}^{2}\left(X\right)$. Since $L$ is Lagrangian,
one has $\rho_{1}^{*}\omega=0$ and the closedness of $\omega$ provides
\[
\omega=-d_{p}P\omega,\]
which proves the lemma with $\lambda=-P\omega$.
\end{proof}

\subsection{Transverse polarisations}

In the sequel a {}``foliation $P$'' denotes actually the (integrable)
distribution of tangent planes $P_{x}\subset T_{x}X$ to the leaves. 
\begin{defn}
Let $p:X\rightarrow B$ be a symplectic bundle. A \textbf{polarisation}
$P$ is a foliation of $X$ whose leaves are all included in the fibres
of $p$ and Lagrangian.
\end{defn}
In other words, it is a smooth family (parametrized by $B$) of Lagrangian
foliations. On the other hand, it is known that the existence of a
Lagrangian foliation transverse to a given Lagrangian submanifold
is intimately related to the existence of a Liouville form vanishing
on the submanifold. Here is a fibred version of this fact. 
\begin{lem}
\label{lem-exist-transverse-pola}Let $p:X\rightarrow B$ be a symplectic
bundle and $L\subset X$ a Lagrangian subbundle. If $\lambda\in\Omega_{p}^{1}\left(X\right)$
is a fibred Liouville form on $X$ vanishing on the manifold $L$,
then there exists near $L$ a unique polarisation which is transverse
to $L$ and everywhere included in $\ker\lambda$.\end{lem}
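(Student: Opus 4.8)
The plan is to reconstruct, fibre by fibre, the classical correspondence between Liouville forms vanishing on a Lagrangian and transverse Lagrangian foliations, and to check that the construction is smooth along the base. First I would introduce the \emph{fibred Liouville (Euler) vector field} $E\in\frak{X}_{p}\left(X\right)$ defined fibrewise by $E\lrcorner\,\omega=\lambda$; since each $\omega_{b}$ is non-degenerate this determines $E$ uniquely and smoothly, so $E$ is a genuine vertical vector field whose restriction $E_{b}$ satisfies $E_{b}\lrcorner\,\omega_{b}=\lambda_{b}$. Because $\lambda$ vanishes on $L$ and $\omega$ is fibrewise non-degenerate, $E$ vanishes on $L$. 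Using the fibred Cartan formula one gets $\mathcal{L}_{E}\omega=d_{p}\left(E\lrcorner\,\omega\right)=d_{p}\lambda=\omega$ and, since $\lambda\left(E\right)=\omega\left(E,E\right)=0$, also $\mathcal{L}_{E}\lambda=\lambda$; hence the fibrewise flow $\phi^{s}$ of $E$ scales both forms, $\left(\phi^{s}\right)^{*}\omega=e^{s}\omega$ and $\left(\phi^{s}\right)^{*}\lambda=e^{s}\lambda$.

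Next I would analyse $E$ near $L$. At a point $\ell\in L$ the linearisation $A=D_{\ell}E$ is well defined; differentiating the identity $\mathcal{L}_{E}\omega=\omega$ at the zero $\ell$ gives $\omega\left(Au,v\right)+\omega\left(u,Av\right)=\omega\left(u,v\right)$, while $\left.E\right|_{L}=0$ forces $T_{\ell}L\subseteq\ker A$. A short linear-algebra argument then shows that $A$ is diagonalisable with only the eigenvalues $0$ and $1$, that the $0$-eigenspace is exactly $T_{\ell}L$, and that the $1$-eigenspace $V_{\ell}:=\ker\left(A-\mathbb{I}\right)$ is a Lagrangian complement to $T_{\ell}L$. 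Thus $L$ is, fibrewise, a normally hyperbolic submanifold for $E$ with purely expanding normal bundle of exponent $1$, and I would define the candidate polarisation $P$ to be the unstable foliation of $E$: its leaf through $\ell$ is $W^{u}\left(\ell\right)=\{x:\phi^{s}\left(x\right)\to\ell\text{ as }s\to-\infty\}$, tangent to $V_{\ell}$ at $\ell$. By the invariant-manifold theorem these leaves exist in a neighbourhood of $L$, foliate it, are transverse to $L$, and — because $E$ is a smooth vertical field on the bundle — depend smoothly on $b$, so the $P_{b}$ assemble into a smooth Lagrangian distribution $P\subset V_{p}$, i.e.\ a polarisation near $L$.

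It then remains to verify the two defining properties and uniqueness. For the Lagrangian property of a leaf $\Lambda=W^{u}\left(\ell\right)$, observe that $E$ is tangent to $\Lambda$, so $\phi^{s}$ preserves $\Lambda$ and $\left(\phi^{s}\right)^{*}\left(\left.\omega\right|_{\Lambda}\right)=e^{s}\left.\omega\right|_{\Lambda}$; evaluating the identity $\left.\omega\right|_{\Lambda}=e^{-s}\left(\phi^{s}\right)^{*}\left(\left.\omega\right|_{\Lambda}\right)$ at a point of $\Lambda$ and using $D\phi^{s}=O\left(e^{s}\right)$ on $T\Lambda$ shows the right-hand side is $O\left(e^{s}\right)$, which tends to $0$ as $s\to-\infty$; since the left-hand side is independent of $s$, we get $\left.\omega\right|_{\Lambda}=0$. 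Once each leaf is Lagrangian, the inclusion $P\subset\ker\lambda$ is immediate: for $v$ tangent to $\Lambda$ we have $\lambda\left(v\right)=\omega\left(E,v\right)=0$ because both $E$ and $v$ are tangent to the Lagrangian leaf. For uniqueness, let $P'$ be any polarisation near $L$, transverse to $L$, with $P'\subset\ker\lambda=E^{\omega}$. Since $P'$ is Lagrangian, taking $\omega$-orthogonals gives $\left\langle E\right\rangle =\left(E^{\omega}\right)^{\omega}\subset\left(P'\right)^{\omega}=P'$, so $E$ is tangent to $P'$ and its leaves are $\phi^{s}$-invariant; hence the tangent space at $\ell$ of the leaf of $P'$ through $\ell$ is an $A$-invariant Lagrangian complement to $T_{\ell}L=\ker A$, which forces it to equal $V_{\ell}$. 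By uniqueness of the unstable manifold through $\ell$, the leaves of $P'$ coincide with those of $P$, so $P'=P$.

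The main obstacle is the analysis near $L$: making rigorous the existence, smoothness, and smooth $b$-dependence of the unstable foliation, together with the decay estimate $D\phi^{s}=O\left(e^{s}\right)$ that yields the Lagrangian property. All of this rests on the clean normal spectrum $\{1\}$ (with tangential spectrum $\{0\}$) extracted from $\mathcal{L}_{E}\omega=\omega$, which is precisely what makes the normally hyperbolic machinery apply and the resulting foliation as smooth as the data; the fibred formalism of Section~\ref{sec:fibred-differential-forms} is what guarantees that carrying out this construction fibre by fibre produces an honest fibred object.
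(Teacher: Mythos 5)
Your proposal follows essentially the same route as the paper: both introduce the (fibred) Liouville vector field dual to $\lambda$, linearise it along $L$ to obtain the eigenvalue $0/1$ splitting with the $1$-eigenspace a Lagrangian complement, appeal to invariant-manifold/normal-form theory (the paper cites Sternberg, you cite the normally hyperbolic unstable foliation) for existence and uniqueness of the transverse $Y$-invariant foliation, and use the conformal scaling $\left(\phi^{s}\right)^{*}\omega=e^{s}\omega$ under the flow to show the leaves are Lagrangian and lie in $\ker\lambda$. The only cosmetic difference is that you build the foliation fibrewise and invoke smooth parameter dependence, whereas the paper applies the theorem to $Y$ on all of $X$ and then checks separately that the leaves stay inside the fibres of $p$.
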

\begin{proof}
Denote by $\omega\in\Omega_{p}^{2}\left(X\right)$ the fibred symplectic
form. For any $b\in B$, the Liouville form $\lambda_{b}\in\Omega^{1}\left(X_{b}\right)$
corresponds through the isomorphism $\omega_{b}:TX_{b}\rightarrow T^{*}X_{b}$
to a vector field $Y_{b}\in\mathfrak{X}\left(X_{b}\right)$ vanishing
on $L_{b}$. We adapt an argument from \cite[p. 230]{guillemin_sternberg_2}.
At each point $x\in L_{b}$, the Jacobian $J_{b,x}=\left[Y_{b},.\right]_{x}$
actually defines a linear map $J_{b,x}:T_{x}X_{b}\rightarrow T_{x}X_{b}$.
Following \cite[p. 230]{guillemin_sternberg_2}, one can show that
$T_{x}X_{b}$ decomposes into two eigenspaces, namely $T_{x}L_{b}$
with eigenvalue $0$ and a transverse Lagrangian plane $P_{x}$ with
eigenvalue $1$. Now, we can obviously view the family $Y_{b}$ simply
as a vector field $Y$ on $X$, vanishing on $L$ and everywhere tangent
to the fibres of $p$. It is then easy to see that at any point $x\in L$,
the Jacobian $J_{x}=\left[Y,.\right]_{x}$ provides an eigenspace
decomposition of $T_{x}X$ into $T_{x}L$ with eigenvalue $0$ and
$P_{x}$ eigenvalue $1$. Lastly, we can use a result of \cite{sternberg_1}
on normal forms of singular vector fields, which implies in particular
that there exists a unique transverse foliation to $L$, in a neighbourhood
of $L$, which is invariant by $Y$.

It remains now to show that the foliation is tangent to the fibres
of $p$ and Lagrangian in each fibre. First of all, if the foliation
was not tangent to the fibres of $p$, then, because it is $Y$-invariant,
it would contradict the fact that it is transverse to $L$. Indeed,
if $Z$ is a vector in the foliation but not tangent to the fibres
of $p$, then the vector $D\phi_{Y}^{-t}\left(Z\right)$ would tend,
for $t\rightarrow\infty$, to a vector tangent to $L$ but also to
$P$. This is because, as $t\rightarrow\infty$ the flows $\phi_{Y}^{-t}$
contracts a neighbourhood of $L$ onto $L$. On the other hand, $L$
and the leaves of $P$ have complementary dimension in $X$, since
$L_{b}$ and the leaves of $P$ have complementary dimension in $X_{b}$.
This contradiction implies therefore that the leaves of $P$ are contained
in the fibres of $p$.

Second, this foliation must be Lagrangian in each $X_{b}$. Indeed,
for any two vectors $Z_{1},Z_{2}\in T_{x}X_{b}$ at some point $x\in X_{b}$,
one has \[
\omega_{b}\left(\left(\phi_{Y_{b}}^{-t}\right)_{*}Z_{1},\left(\phi_{Y_{b}}^{-t}\right)_{*}Z_{2}\right)=O\left(e^{-2t}\right)\]
 on the one hand and \[
\left(\phi_{Y_{b}}^{-t}\right)^{*}\omega_{b}\left(Z_{1},Z_{2}\right)=e^{-t}\omega_{b}\left(Z_{1},Z_{2}\right)\]
 on the other hand, since $Y_{b}$ is a Liouville vector field, and
thus symplectic conformal. This implies that $\omega_{b}\left(Z_{1},Z_{2}\right)=0$.

To conclude, notice that the Liouville vector field $X_{b}$ is tangent
to the polarisation if and only if the latter is included in $\ker\lambda_{b}$.
\end{proof}

\subsection{\label{sub-WTN-lag-subbundle}WTN for Lagrangian subbundles}
\begin{thm}
\label{thm-WTN-for-lag-subbundle}Let $p:X\rightarrow B$ be a symplectic
bundle. Let $L\subset X$ be a Lagrangian subbundle and $p_{L}:L\rightarrow B$
the projection. Let $V_{p_{L}}^{*}\left(L\right)\overset{\pi}{\rightarrow}L$
be the $p_{L}$-vertical cotangent bundle of $L$. Then, there exist
a neighbourhood $U\subset X$ of $L$, a neighbourhood $V\subset V_{p_{L}}^{*}\left(L\right)$
of the $0$-section of $\pi$ and a diffeomorphism $\varphi:V\rightarrow U$
with the following properties : 
\begin{itemize}
\item The image of the $0$-section is $L$.
\item The following diagram commutes \begin{eqnarray*}
\varphi:V & \rightarrow & U\\
\downarrow p_{L}\circ\pi &  & \downarrow p\\
B & \overset{\mathbb{I}}{\rightarrow} & B\end{eqnarray*}

\item For each $b\in B$, the restriction \[
\varphi_{b}:\left(p_{L}\circ\pi\right)^{-1}\left(b\right)\cap V\rightarrow p^{-1}\left(b\right)\cap U\]
 is symplectic.
\end{itemize}
Therefore, since $\left(p_{L}\circ\pi\right)^{-1}\left(b\right)=T^{*}L_{b}$,
the restrictions $\varphi_{b}$ form indeed a smooth family of WTN
around the Lagrangian submanifolds $L_{b}=p_{L}^{-1}\left(b\right)$.

\noindent %
\begin{minipage}[t]{1\textwidth}%
\begin{center}
\includegraphics[scale=0.7]{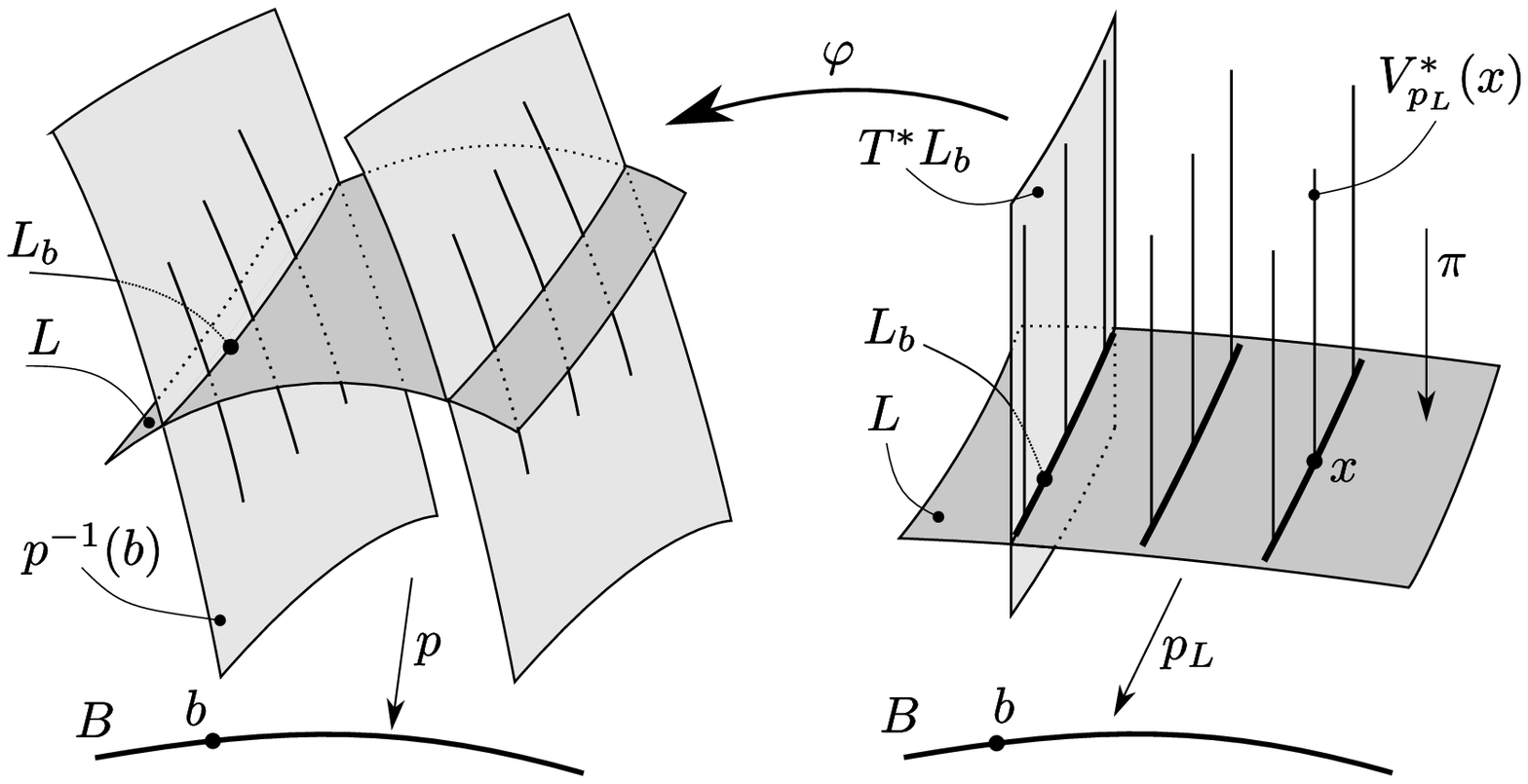}
\par\end{center}%
\end{minipage}\end{thm}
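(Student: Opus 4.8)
The plan is to run the classical two-step proof of Weinstein's theorem (a linear normal-form step followed by a Moser deformation) entirely inside the category of fibred forms, so that smoothness in the base parameter $b$ comes for free. Write $q=p_{L}\circ\pi:V_{p_{L}}^{*}\left(L\right)\rightarrow B$ for the projection of the model, which by Example \ref{exa-fibred-cotangent-bundle} is a symplectic bundle whose fibre over $b$ is $T^{*}L_{b}$. It carries a canonical fibred Liouville form $\lambda_{\mathrm{can}}\in\Omega_{q}^{1}$ (the tautological form on each $T^{*}L_{b}$) and the canonical fibred symplectic form $\omega_{\mathrm{can}}=d_{q}\lambda_{\mathrm{can}}$. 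I would aim to produce a $p$-vertical $\varphi$ with $\varphi^{*}\omega=\omega_{\mathrm{can}}$, which is exactly the required fibrewise symplecticity.

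\emph{Step 1 (linear model).} First I would apply Lemma \ref{lem-existe-liouville-form} to get a fibred Liouville form vanishing on $L$, then Lemma \ref{lem-exist-transverse-pola} to get a polarisation $P$ transverse to $L$; its restriction $P|_{L}$ is a Lagrangian complement to $TL$ inside $TX$ along $L$, depending smoothly on $b$. Using the $p$-vertical tubular neighbourhood of Lemma \ref{lem-voisinage-pratique}, I would build a $p$-vertical diffeomorphism $\psi:V_{0}\rightarrow U_{0}$ from a neighbourhood of the zero section onto a neighbourhood of $L$, equal to the identity on $L$, covering $\mathbb{I}_{B}$, and whose fibrewise derivative along the zero section carries the cotangent fibre $T_{x}^{*}L_{b}$ isomorphically onto $P_{x}$ by the map determined by $\omega_{b}$ (the inverse of $w\mapsto\omega_{b}\left(\,\cdot\,,w\right)|_{T_{x}L_{b}}$). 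A short check on the three types of pairs of tangent vectors shows that, with this normalisation, $\psi^{*}\omega$ and $\omega_{\mathrm{can}}$ agree as fibred $2$-forms at every point of the zero section.

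\emph{Step 2 (Moser deformation).} I would set $\tau=\psi^{*}\omega-\omega_{\mathrm{can}}\in\Omega_{q}^{2}\left(V_{0}\right)$; it is $d_{q}$-closed and vanishes along the zero section. Running the homotopy operator $P$ from the proof of Lemma \ref{lem-existe-liouville-form} (built from a fibre-preserving retraction of $V_{0}$ onto the zero section) exactly as there, and using $\rho_{1}^{*}\tau=0$ together with $d_{q}\tau=0$, yields $\tau=d_{q}\sigma$ with $\sigma:=-P\tau$ a fibred $1$-form vanishing on the zero section. Putting $\omega_{t}=\omega_{\mathrm{can}}+t\tau$, along the zero section $\omega_{t}=\omega_{\mathrm{can}}$ is fibrewise nondegenerate, so after shrinking $V_{0}$ it is fibrewise symplectic for all $t\in\left[0,1\right]$. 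I would then define the time-dependent vertical vector field $Z_{t}\in\frak{X}_{q}$ by the fibrewise equation $Z_{t}\lrcorner\omega_{t}=-\sigma$; it is smooth (nondegeneracy is an open fibrewise condition) and vanishes on the zero section because $\sigma$ does, so its flow $\phi_{t}$ fixes the zero section pointwise. The fibred Cartan formula gives \[ \frac{d}{dt}\phi_{t}^{*}\omega_{t}=\phi_{t}^{*}\bigl(\mathcal{L}_{Z_{t}}\omega_{t}+\tau\bigr)=\phi_{t}^{*}\bigl(d_{q}\left(Z_{t}\lrcorner\omega_{t}\right)+\tau\bigr)=\phi_{t}^{*}\left(-d_{q}\sigma+\tau\right)=0, \] so $\phi_{1}^{*}\left(\psi^{*}\omega\right)=\phi_{1}^{*}\omega_{1}=\omega_{\mathrm{can}}$. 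Then $\varphi=\psi\circ\phi_{1}$ is $p$-vertical, maps the zero section to $L$, and satisfies $\varphi^{*}\omega=\omega_{\mathrm{can}}$, i.e. each $\varphi_{b}$ is a symplectomorphism; taking $V=\phi_{1}^{-1}\left(V_{0}\right)$ and $U=U_{0}$ gives the statement.

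The routine parts—the explicit construction of $\psi$, the verification that $\psi^{*}\omega=\omega_{\mathrm{can}}$ on the zero section, and the sign bookkeeping in the Cartan computation—are standard. The genuinely fibred point, and the main thing to watch, is that the entire deformation is carried out with honest fibred objects: the primitive $\sigma=-P\tau$ is a fibred form precisely because the homotopy operator of Lemma \ref{lem-existe-liouville-form} returns fibred forms, and $Z_{t}$ is a genuine element of $\frak{X}_{q}$ because solving $Z_{t}\lrcorner\omega_{t}=-\sigma$ fibrewise is smooth in $b$. This is exactly what upgrades the fibre-by-fibre Weinstein charts into a single smooth family. The one remaining technical nuisance is the existence of the flow $\phi_{t}$ up to $t=1$ when $L$ (or $B$) is noncompact; as usual this is handled by taking $V$ to be the open set of points whose trajectory stays defined on $\left[0,1\right]$, which contains the zero section since $Z_{t}$ vanishes there.
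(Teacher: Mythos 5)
Your argument is correct, but it is not the route the paper takes. The paper follows Weinstein's original construction as presented in Woodhouse: after producing the vanishing fibred Liouville form (Lemma \ref{lem-existe-liouville-form}) and the transverse polarisation (Lemma \ref{lem-exist-transverse-pola}), it uses the canonical affine structure on the leaves of the polarisation to define exponential maps along the leaves, and defines $\varphi$ directly by sending a covector $\alpha\in T_{x}^{*}L_{b}$ to $\exp_{x}Y$, where $Y\in P_{x}$ corresponds to $\alpha$ under $\omega_{b}$; the symplecticity of each $\varphi_{b}$ is then quoted from Woodhouse, and the fibred lemmas serve only to make the polarisation, hence the chart, depend smoothly on $b$. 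You instead run the classical two-step Moser proof entirely inside the fibred category: a tubular-neighbourhood map $\psi$ normalised along the zero section using the same transverse Lagrangian complement, followed by a fibred Moser isotopy whose primitive is produced by the homotopy operator of Lemma \ref{lem-existe-liouville-form}. Both are sound. Your version is more self-contained relative to the machinery the paper has already built (fibred Cartan formula, fibred Lie derivative, fibred homotopy operator), avoids invoking the affine structure on polarisation leaves and the external reference for symplecticity, and in fact only needs a smooth Lagrangian complement to $TL$ along $L$ rather than an integrable one, so Lemma \ref{lem-exist-transverse-pola} is stronger than what you actually use. The price is the extra bookkeeping of the Moser flow (existence of $\phi_{t}$ up to $t=1$, shrinking so that $\omega_{t}$ stays fibrewise nondegenerate), which you handle correctly. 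One cosmetic remark: you use the letter $P$ both for the polarisation and for the homotopy operator; rename one of them.
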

\begin{proof}
To construct this family of WTN around the fibres $L_{b}$, we will
adapt a construction from \cite[sec. 4.4]{woodhouse1}. First of all,
Lemma \ref{lem-existe-liouville-form} tells us that there exists
a Liouville form $\lambda_{b}\in\Omega_{p}^{1}\left(U\right)$ in
a neighbourhood $U$ of $L$ and vanishing on $L$. Thanks to Lemma
\ref{lem-exist-transverse-pola}, this implies the existence in $U$
of a transverse polarisation $P$. Now, for any $b\in B$, the restriction
to $X_{b}\cap U$ of $P$ is a polarisation. We know from \cite{weinstein1}
that any leaf is naturally endowed with an affine structure. This
allows to do parallel transport and thus define exponential maps.
At any point $x\in L_{b}$, we have two transversal Lagrangian planes
in $T_{x}X_{b}$, namely the tangent spaces to $L_{b}$ and to $P$
respectively. Using the natural isomorphism between $T_{x}^{*}L_{b}$
and $P_{x}$, we construct $\varphi$ as follows. For any $\alpha\in V_{p_{L}}^{*}\left(x\right)$,
which is canonically an element of $T_{x}^{*}L_{b}$, we associate
its corresponding tangent vector $Y\in P_{x}$. Then, we take the
exponential map $\exp_{x}Y$ and obtain a point in $X_{b}$, on the
same leaf of $P$ as $x$. Of course, $Y$ has to be sufficiently
small, so that the exponential map still makes sense. This means that
our construction is well-defined provided $\alpha$ is taken in a
sufficiently small neighbourhood $V$ of the $0$-section. Up to taking
smaller $V$ and $U$, one can insure that $\varphi$ is a diffeomorphism
from $V$ to $U$. Moreover, by construction $\varphi$ sends $\left(p_{L}\circ\pi\right)^{-1}\left(b\right)\cap V$
to $p^{-1}\left(b\right)\cap U$. Then, following \cite[sec. 4.4]{woodhouse1},
one can show that for each $b\in B$, the restriction $\varphi_{b}$
is symplectic. 
\end{proof}

\subsection{\label{sub-manifold-structure-Subb}The Fréchet manifold of Lagrangian
subbundles}

Given a fibration $p:X\rightarrow B$ we will denote by $\mbox{Subb}\left(X\right)$
the space of all compact subbundles $L$ of $X$. Because of Ehresmann
Theorem \cite{ehresmann} this is exactly the space of compact submanifolds
$L\subset X$ such that the restriction of the projection $\left.p\right|_{L}:L\rightarrow B$
is a submersion. The notation $\mbox{Subb}\left(X\right)$ is slightly
imprecise, since $X$ denotes here the whole fibration structure,
i.e., the manifolds $X$ and $B$, and the projection $p$. 

The space $\mbox{Subb}\left(X\right)$ is a subset of the space $\mathcal{S}\left(X\right)$
of all compact submanifolds of $X$, which is known to be a Fréchet
manifold, see \cite[p. 92]{hamilton} or \cite{michor3}. We prove
now that it is actually an open set and therefore itself a Fréchet
manifold. Indeed, the manifold structure on $\mathcal{S}\left(X\right)$
near some $L_{0}\subset X$ is given through an identification of
the $C^{1}$-close submanifolds $L\subset X$ with sections of a tubular
neighbourhood $N$ of $L_{0}$ in $X$. For any $L$ $C^{1}$-close
to $L_{0}$, denote by $\alpha\in\Gamma\left(L_{0},N\right)$ the
corresponding section. Now, $\left.p\right|_{L}:L\rightarrow B$ is
a submersion if and only if $p\circ\alpha:L_{0}\rightarrow B$ is
so, because $\alpha$ defines a diffeomorphism from $L_{0}$ to $L$.
On the other hand, the condition of $p\circ\alpha$ being a submersion
is equivalent to the $1$-jet $j^{1}\left(p\circ\alpha\right)$ having
an image included in an open set in $J^{1}\left(L_{0},B\right)$ (the
open set composed of linear maps $TL_{0}\rightarrow TB$ with maximal
rank). Finally, the map $\alpha\mapsto j^{1}\left(p\circ\alpha\right)$
is clearly continuous, and it follows that $\mbox{Subb}\left(X\right)$
is an open subset of $\mathcal{S}\left(X\right)$. 

Now, suppose $X$ is a symplectic bundle. Thanks to the WTN of Theorem
\ref{thm-WTN-for-lag-subbundle}, we can investigate the differentiable
structure of the space $\mbox{Subb}_{Lag}\left(X\right)$ of Lagrangian
subbundles.
\begin{thm}
\label{thm-lag-subbundle-sous-varioche}$\mbox{Subb}_{Lag}\left(X\right)$
is a Fréchet submanifold of $\mbox{Subb}\left(X\right)$.\end{thm}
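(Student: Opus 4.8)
The plan is to exhibit, around an arbitrary Lagrangian subbundle $L\in\mbox{Subb}_{Lag}\left(X\right)$, a chart of $\mbox{Subb}\left(X\right)$ in which $\mbox{Subb}_{Lag}\left(X\right)$ appears as an open subset of a closed, complemented linear subspace; this is precisely the submanifold property. The chart will be the one furnished by the fibred WTN of Theorem \ref{thm-WTN-for-lag-subbundle}. Recall that the manifold structure on $\mathcal{S}\left(X\right)$, hence on its open subset $\mbox{Subb}\left(X\right)$, is built from tubular neighbourhoods, and that the charts coming from different tubular neighbourhoods are compatible (this is part of the construction in \cite{hamilton,michor3}). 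I would therefore use the diffeomorphism $\varphi:V\rightarrow U$ to take $V\subset V_{p_{L}}^{*}\left(L\right)$ as a tubular neighbourhood of $L$ in $X$. Since the fibre dimension of $V_{p_{L}}^{*}\left(L\right)$ equals the codimension of $L$ in $X$, this is legitimate, and the resulting chart identifies the submanifolds $C^{1}$-close to $L$ with the (small) sections of $V_{p_{L}}^{*}\left(L\right)$, that is, with a neighbourhood of $0$ in the model space $\Omega_{p_{L}}^{1}\left(L\right)=\Gamma\left(L,V_{p_{L}}^{*}\right)$.

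The first thing I would check is that every such section $\alpha$ produces an element of $\mbox{Subb}\left(X\right)$, so that the entire chart domain lies in $\mbox{Subb}\left(X\right)$ with model space $\Omega_{p_{L}}^{1}\left(L\right)$. Writing $s_{\alpha}:L\rightarrow V_{p_{L}}^{*}\left(L\right)$ for the section and $L_{\alpha}=\varphi\left(s_{\alpha}\left(L\right)\right)$, the commutativity of the diagram in Theorem \ref{thm-WTN-for-lag-subbundle} gives $p\circ\varphi\circ s_{\alpha}=p_{L}\circ\pi\circ s_{\alpha}=p_{L}$, so that $\left.p\right|_{L_{\alpha}}$ coincides with $p_{L}$ through the diffeomorphism $\varphi\circ s_{\alpha}:L\rightarrow L_{\alpha}$ and is therefore a submersion. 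As $L_{\alpha}$ is compact, it follows that $L_{\alpha}\in\mbox{Subb}\left(X\right)$ for every small $\alpha$.

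Next I would translate the Lagrangian condition fibrewise. Because $\varphi$ is vertical over $B$, one has $L_{\alpha}\cap X_{b}=\varphi_{b}\left(\mbox{graph}\,\alpha_{b}\right)$, where $\alpha_{b}\in\Omega^{1}\left(L_{b}\right)$ is the restriction of $\alpha$ to the fibre $L_{b}$. Since $\varphi_{b}:T^{*}L_{b}\rightarrow X_{b}$ is symplectic, the submanifold $L_{\alpha}\cap X_{b}$ is Lagrangian in $X_{b}$ if and only if $\mbox{graph}\,\alpha_{b}$ is Lagrangian in $\left(T^{*}L_{b},\omega_{\mathrm{can}}\right)$, i.e. if and only if $d\alpha_{b}=0$, by the classical computation $s_{\alpha_{b}}^{*}\omega_{\mathrm{can}}=-d\alpha_{b}$. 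Thus $L_{\alpha}$ is a Lagrangian subbundle precisely when all restrictions $\alpha_{b}$ are closed, which, by the characterisation of $\mathcal{Z}_{p_{L}}^{1}\left(L\right)$ recalled in Section \ref{sec:fibred-differential-forms}, means exactly $\alpha\in\mathcal{Z}_{p_{L}}^{1}\left(L\right)$. In this chart, therefore, $\mbox{Subb}_{Lag}\left(X\right)$ corresponds to the intersection of the neighbourhood of $0$ with the linear subspace $\mathcal{Z}_{p_{L}}^{1}\left(L\right)$.

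To conclude, I would invoke the fibred Hodge decomposition. Here $B$ is compact, being the image of the compact manifold $L$ under the surjective submersion $p_{L}$, so Theorem \ref{lem-fibred-hodge-decomposition}, applied to the fibration $p_{L}:L\rightarrow B$, asserts that $\mathcal{Z}_{p_{L}}^{1}\left(L\right)$ is a closed and complemented subspace of $\Omega_{p_{L}}^{1}\left(L\right)$. The chart constructed above is then a submanifold chart for $\mbox{Subb}_{Lag}\left(X\right)$ inside $\mbox{Subb}\left(X\right)$, and the theorem follows. The step I expect to require the most care is the first one, namely verifying that the tubular neighbourhood supplied by the WTN yields a genuine chart of $\mathcal{S}\left(X\right)$ and that the identification of nearby subbundles with elements of $\Omega_{p_{L}}^{1}\left(L\right)$ is a diffeomorphism onto an open set; by contrast, the algebraic heart of the argument, the complementability of the fibrewise closed forms, is already delivered by Theorem \ref{lem-fibred-hodge-decomposition}.
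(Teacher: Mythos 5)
Your proposal is correct and follows essentially the same route as the paper: the chart furnished by the fibred WTN of Theorem \ref{thm-WTN-for-lag-subbundle} identifies nearby subbundles with fibred $1$-forms in $\Omega_{p_{L}}^{1}\left(L\right)$, the symplecticity of each $\varphi_{b}$ converts the Lagrangian condition into fibrewise closedness $d_{p_{L}}\alpha=0$, and Theorem \ref{lem-fibred-hodge-decomposition} supplies the complementedness of $\mathcal{Z}_{p_{L}}^{1}\left(L\right)$. Your extra verifications (that small sections yield genuine subbundles, and that $B$ is compact as the image of the compact $L$) are welcome refinements but do not change the argument.
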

\begin{proof}
Near any Lagrangian subbundle $L_{0}\subset X$, we will construct
a local chart $\hat{\varphi}$ of $\mbox{Subb}\left(X\right)$ as
follows. Consider the local diffeomorphism $\varphi:V_{p_{L_{0}}}^{*}\left(L_{0}\right)\supset V\rightarrow U\subset X$
of Theorem \ref{thm-WTN-for-lag-subbundle} which sends the graph
of the $0$-section to $L_{0}$. Then, the map $\hat{\varphi}$ which
sends a section $\alpha\in\Gamma\left(L_{0},V_{p_{L_{0}}}^{*}\right)$
(with image in $V$) to the submanifold $\varphi\left(\alpha\left(L_{0}\right)\right)\subset U$
is indeed a chart compatible with the manifold structure of $\mbox{Subb}\left(X\right)$,
since the diffeomorphism $\varphi$ gives $U$ the structure of (an
open set in) vector bundle over $L_{0}$, and any $C^{1}$-close submanifold
$L$ corresponds through $\hat{\varphi}^{-1}$ to a section of this
vector bundle.

Now, a subbundle $L$ is Lagrangian if and only if each $L_{b}\subset X_{b}$
is Lagrangian. Let $\alpha=\hat{\varphi}^{-1}\left(L\right)$ be the
corresponding section of $V_{p_{L_{0}}}^{*}$. As mentioned earlier,
for each $b$ the restriction $\alpha_{b}$ to $\left(L_{0}\right)_{b}=\left.p\right|_{L_{0}}^{-1}\left(b\right)$
is simply a $1$-form on $\left(L_{0}\right)_{b}$. On the other hand,
the graph of $\alpha_{b}$ is precisely the image of $L_{b}$ through
the restriction $\varphi_{b}$. But since each $\varphi_{b}$ is symplectic,
$L_{b}$ is Lagrangian in $X_{b}$ if and only if the $\mbox{gr}\left(\alpha_{b}\right)$
is Lagrangian in $T^{*}\left(L_{0}\right)_{b}$, i.e., $\alpha_{b}$
is closed. This shows that $L$ is Lagrangian if and only the corresponding
fibred form $\alpha\in\Omega_{p_{L_{0}}}^{1}\left(L_{0}\right)$ is
closed, i.e., $d_{p_{L_{0}}}\alpha=0$. Finally, we know from Theorem
\ref{lem-fibred-hodge-decomposition} that the space of fibrewise
closed $1$-forms $\mathcal{Z}_{p}^{1}$ is complemented in $\Omega_{p_{L_{0}}}^{1}\left(L_{0}\right)$.
In particular it is a Fréchet submanifold of $\Omega_{p_{L_{0}}}^{1}\left(L_{0}\right)$.
This concludes the proof, since $\hat{\varphi}$ is a local diffeomorphism.
\end{proof}

\section{\label{sec-lag-fibrations}Space of Lagrangian fibrations}

\subsection{Lagrangian fibrations and Lagrangian subbundles}

A submersion $\pi:M\rightarrow B$ between two compact manifolds $M$
and $B$ is automatically a locally trivial fibration because of Ehresmann
Theorem \cite{ehresmann}. Denote by $\mbox{Fib}\left(M,B\right)$
the set of fibrations from $M$ to $B$. It is an open set of $C^{\infty}\left(M,B\right)$
and therefore a Fréchet submanifold. 
\begin{defn}
Suppose $M$ is a symplectic manifold. A \textbf{Lagrangian fibration}
is a fibration $\pi:M\rightarrow B$ whose fibres $\pi^{-1}\left(b\right)$,
$b\in B$, are all Lagrangian submanifolds of $M$. Denote by $\mbox{Fib}_{\mbox{Lag}}\left(M,B\right)$
the space of Lagrangian fibrations.
\end{defn}
The aim of this section is to prove Corollary \ref{thm-fib_lag-submanifold-of-fib}
which states that $\mbox{Fib}_{\mbox{Lag}}\left(M,B\right)$ is a
Fréchet submanifold of $\mbox{Fib}\left(M,B\right)$. For this, we
can use the results of the previous sections, because of the following
parametrisation. For any smooth map $\pi\in C^{\infty}\left(M,B\right)$
we associate the map $\hat{\pi}=\left(\mathbb{I},\pi\right)$ which
is a smooth section of $M\times B$ viewed as a (trivial) bundle over
$M$. Notice that the correspondence $\pi\leftrightarrow\hat{\pi}$
is a Fréchet diffeomorphism between $C^{\infty}\left(M,B\right)$
and $\Gamma\left(M,M\times B\right)$. 

Now, $M\times B$ is a (trivial) symplectic bundle with respect to
the second projection $p:M\times B\rightarrow B$. On the other hand,
given any $\pi\in C^{\infty}\left(M,B\right)$, consider the graph
$L_{\pi}\subset M\times B$ of the corresponding $\hat{\pi}\in\Gamma\left(M,M\times B\right)$. 

\begin{center}
\includegraphics[scale=0.7]{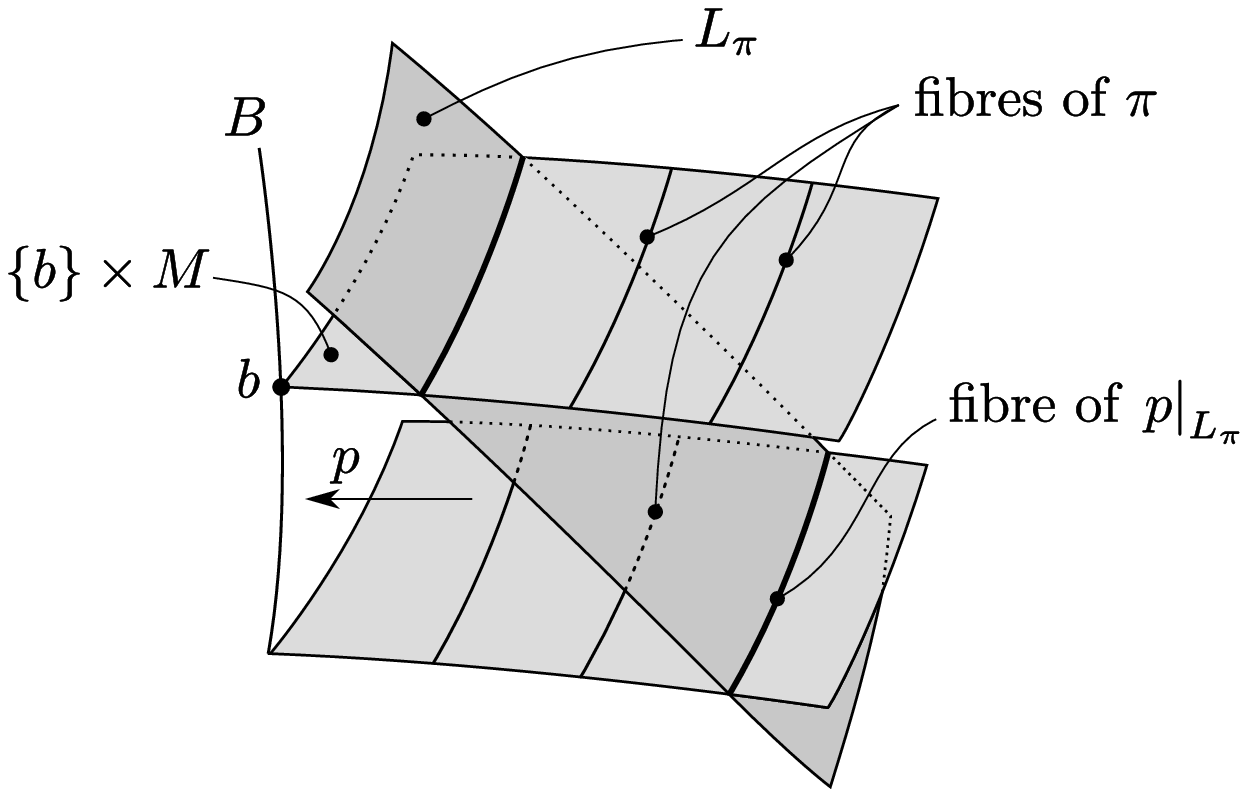}
\par\end{center}

\noindent It is easy to see that $\pi$ is a submersion if and only
if $L_{\pi}$ is a subbundle of $p:M\times B\rightarrow B$. Indeed,
$L_{\pi}$ is a subbundle when the restriction of the projection $\left.p\right|_{L_{\pi}}:L_{\pi}\rightarrow B$
is a submersion. But on the other hand, one has the relation $\pi=p\circ\hat{\pi}$.
Therefore, $\left.p\right|_{L_{\pi}}:L_{\pi}\rightarrow B$ is a submersion
if and only if $\pi$ is so, since $\hat{\pi}$ is an embedding from
$M$ into $M\times B$, whose image is precisely $L_{\pi}$.

Finally, whenever $\left.p\right|_{L_{\pi}}:L_{\pi}\rightarrow B$
is a submersion, its fibre over $b\in B$ is $\left\{ \left(m,b\right)\mid b=\pi\left(m\right)\right\} $,
i.e., $\pi^{-1}\left(b\right)\times\left\{ b\right\} $. Therefore,
the fibres of $\left.p\right|_{L_{\pi}}$ are Lagrangian in the fibres
of $p$ if and only if the fibres of $\pi$ are Lagrangian. We have
thus proved the following :
\begin{lem}
\label{lem-equiv-subbundle-fibration}The submanifold $L_{\pi}\subset M\times B$
is a (Lagrangian) subbundle of $p:M\times B\rightarrow B$ if and
only $\pi$ is a (Lagrangian) fibration.
\end{lem}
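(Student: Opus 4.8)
The plan is to assemble into a single statement the two equivalences that the discussion above has already isolated, treating the ``subbundle'' assertion and its ``Lagrangian'' refinement separately.

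First I would establish the subbundle equivalence. As recalled above (via Ehresmann's theorem \cite{ehresmann}, since $M$ and $B$ are compact), $L_{\pi}$ is a subbundle of $p$ precisely when the restricted projection $\left.p\right|_{L_{\pi}}:L_{\pi}\rightarrow B$ is a submersion. The section $\hat{\pi}=\left(\mathbb{I},\pi\right)$ is an embedding of $M$ with image $L_{\pi}$, hence a diffeomorphism from $M$ onto $L_{\pi}$, and it satisfies $p\circ\hat{\pi}=\pi$. Since composing a map with a diffeomorphism does not change whether it is a submersion, $\left.p\right|_{L_{\pi}}$ is a submersion if and only if $\pi$ is one, i.e. if and only if $\pi$ is a fibration. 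This gives the unbracketed version of the lemma.

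Next I would upgrade this to the Lagrangian case. Assuming now that $\pi$ (equivalently $L_{\pi}$) is a fibration, I would compute the fibre of $\left.p\right|_{L_{\pi}}$ over $b\in B$, namely $\left\{ \left(m,b\right)\mid\pi\left(m\right)=b\right\} =\pi^{-1}\left(b\right)\times\left\{ b\right\}$. The only point needing care is the identification of the ambient symplectic fibre: since $M\times B$ is the \emph{trivial} symplectic bundle over $B$ with second projection $p$, its fibre $X_{b}=p^{-1}\left(b\right)=M\times\left\{ b\right\}$ carries, by restriction of the fibred symplectic form, exactly the symplectic form $\omega$ of $M$ under the canonical identification $M\times\left\{ b\right\} \cong M$. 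Through this identification the fibre $\left(L_{\pi}\right)_{b}=\pi^{-1}\left(b\right)\times\left\{ b\right\}$ corresponds to $\pi^{-1}\left(b\right)\subset M$, so $\left(L_{\pi}\right)_{b}$ is Lagrangian in $X_{b}$ if and only if $\pi^{-1}\left(b\right)$ is Lagrangian in $M$.

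Combining the two equivalences over all $b\in B$ yields the lemma: by Definition \ref{def-lag-subbundle}, $L_{\pi}$ is a Lagrangian subbundle if and only if every $\left(L_{\pi}\right)_{b}$ is Lagrangian in $X_{b}$, which by the previous paragraph holds if and only if every fibre $\pi^{-1}\left(b\right)$ is Lagrangian in $M$, i.e. if and only if $\pi$ is a Lagrangian fibration. There is essentially no technical obstacle in this argument; the single step worth stating explicitly is the symplectic identification of the fibre $X_{b}$ with $M$, which is immediate from the triviality of the symplectic bundle $M\times B$.
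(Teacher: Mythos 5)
Your proposal is correct and follows essentially the same route as the paper, which proves the lemma in the discussion immediately preceding its statement: the subbundle equivalence via the identity $\pi=p\circ\hat{\pi}$ together with the fact that $\hat{\pi}$ is an embedding onto $L_{\pi}$, and the Lagrangian refinement by identifying the fibre of $\left.p\right|_{L_{\pi}}$ over $b$ with $\pi^{-1}\left(b\right)\times\left\{ b\right\}$ inside $M\times\left\{ b\right\} \cong M$. Your explicit remark that the restriction of the fibred symplectic form to $M\times\left\{ b\right\}$ is the symplectic form of $M$ is a small but welcome clarification that the paper leaves implicit.
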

To complete the argument, it remains to show that the parametrisation
$\pi\mapsto L_{\pi}$ is a Fréchet diffeomorphism. But this relies
on the following general lemma.
\begin{lem}
\label{lem-bi-fibration}Let $N$ a manifold equipped with two fibre
bundle structures $p_{1}:N\rightarrow L$ and $p_{2}:N\rightarrow L$
over the same compact manifold $L$. Suppose in addition that there
exists a map $\iota_{L}:L\rightarrow N$ which is a section of both
fibrations, i.e. $p_{1}\circ\iota_{L}=p_{2}\circ\iota_{L}=\mathbb{I}_{L}$.
Then, the Fréchet manifolds of sections $\Gamma\left(p_{1}\right)$
and $\Gamma\left(p_{2}\right)$ are locally diffeomorphic around $\iota_{L}$.
An explicit diffeomorphism is \begin{eqnarray*}
\Psi:\Gamma\left(p_{1}\right)\supset V_{1} & \rightarrow & V_{2}\subset\Gamma\left(p_{2}\right)\\
\alpha & \mapsto & \alpha\circ\left(p_{2}\circ\alpha\right)^{-1}\end{eqnarray*}
and its inverse is $\Psi^{-1}:\beta\mapsto\beta\circ\left(p_{1}\circ\beta\right)^{-1}$.

\noindent %
\begin{minipage}[t]{1\textwidth}%
\begin{center}
\includegraphics{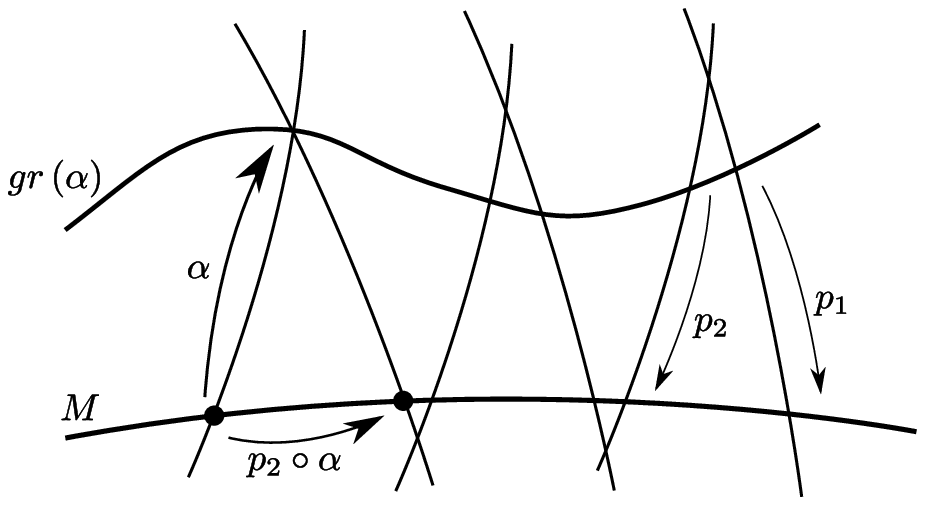}
\par\end{center}%
\end{minipage}\end{lem}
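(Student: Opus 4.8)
The plan is to check that the two explicit formulas define mutually inverse smooth maps on suitable neighbourhoods, so that the only genuine work is smoothness in the Fr\'echet category. First I would fix the domains. For $\alpha\in\Gamma(p_1)$ the composite $p_2\circ\alpha\colon L\to L$ equals $\mathbb{I}_L$ when $\alpha=\iota_L$; since $L$ is compact, $\mathrm{Diff}(L)$ is open in $C^\infty(L,L)$, so there is an open neighbourhood $V_1\subset\Gamma(p_1)$ of $\iota_L$ on which $p_2\circ\alpha$ is a diffeomorphism. On $V_1$ the formula $\Psi(\alpha)=\alpha\circ(p_2\circ\alpha)^{-1}$ makes sense, and $p_2\circ\Psi(\alpha)=(p_2\circ\alpha)\circ(p_2\circ\alpha)^{-1}=\mathbb{I}_L$, so indeed $\Psi(\alpha)\in\Gamma(p_2)$; I would define $V_2$ symmetrically. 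The inverse identity is then pure bookkeeping: for $\alpha\in V_1$ and $\beta=\Psi(\alpha)$, the relation $p_1\circ\alpha=\mathbb{I}_L$ gives $p_1\circ\beta=(p_2\circ\alpha)^{-1}$, whence $(p_1\circ\beta)^{-1}=p_2\circ\alpha$ and $\Psi^{-1}(\beta)=\beta\circ(p_2\circ\alpha)=\alpha$; the symmetric computation yields $\Psi\circ\Psi^{-1}=\mathbb{I}$, so $\Psi$ is a bijection from $V_1$ onto $V_2$.

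The smoothness I would obtain by exhibiting $\Psi$ as a composite of operations already known to be smooth on manifolds of mappings. Viewing $\Gamma(p_i)\hookrightarrow C^\infty(L,N)$ as (splitting) submanifolds, $\Psi$ factors as
\[
\alpha\ \longmapsto\ \bigl(\alpha,\,p_2\circ\alpha\bigr)\ \longmapsto\ \bigl(\alpha,\,(p_2\circ\alpha)^{-1}\bigr)\ \longmapsto\ \alpha\circ(p_2\circ\alpha)^{-1}.
\]
Here the left composition $\alpha\mapsto p_2\circ\alpha$ by the fixed smooth map $p_2$, the inversion $\phi\mapsto\phi^{-1}$ on $\mathrm{Diff}(L)$, and the composition $(\alpha,\phi)\mapsto\alpha\circ\phi$ are each smooth by the standard theory of spaces of mappings over a compact base \cite{hamilton,michor3}. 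Hence $\Psi$ is smooth, and the identical argument applied to $\Psi^{-1}$ shows it is smooth as well; together with the inversion identities of the first paragraph this proves that $\Psi$ is a diffeomorphism between the neighbourhoods $V_1$ and $V_2$, as claimed.

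The hard part is precisely this last paragraph. The formulas themselves are elementary, but justifying that composition and inversion of diffeomorphisms are genuinely smooth --- not merely continuous --- on these infinite-dimensional section spaces is where one must rely on the nontrivial structure of $\mathrm{Diff}(L)$ as a Fr\'echet Lie group and on the Cartesian-closedness (or tameness) results underlying the smooth calculus on $C^\infty(L,N)$; for $L$ compact these are exactly the results supplied by the cited references.
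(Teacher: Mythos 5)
Your proposal is correct and follows essentially the same route as the paper: both define $V_1$ as the preimage of $\mathrm{Diff}(L)$ under the (continuous) map $\alpha\mapsto p_2\circ\alpha$, verify $\Psi(\alpha)\in\Gamma(p_2)$ and the inverse identity by the same direct computation using $p_1\circ\alpha=\mathbb{I}_L$, and appeal to smoothness of composition and inversion of diffeomorphisms in the Fr\'echet category via the same standard references. Your explicit factorisation of $\Psi$ and symmetric definition of $V_2$ are slightly more careful than the paper's ``up to taking a smaller $V_1$'', but the substance is identical.
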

\begin{proof}
First of all, the map $A:\alpha\mapsto p_{2}\circ\alpha$ is smooth
from $\Gamma\left(p_{1}\right)$ to $C^{\infty}\left(L,L\right)$.
In particular it is continuous and therefore the set $V_{1}=A^{-1}\left(\mbox{Diff}\left(L\right)\right)$
is open. It is also non-empty since $A\left(\iota_{L}\right)=\mathbb{I}_{L}$.
On this neighbourhood of $\iota_{L}$, the map $\Psi$ is therefore
well-defined. It is also smooth, since taking the inverse of a diffeomorphism
is a Fréchet smooth map \cite[p. 92]{hamilton}. Moreover, for any
$\alpha\in V_{1}$ the image $\Psi\left(\alpha\right)$ is indeed
in $\Gamma\left(p_{2}\right)$ since $p_{2}\circ\alpha\circ\left(p_{2}\circ\alpha\right)^{-1}=\mathbb{I}_{L}$.
Now, we check that the above given formula for the inverse $\Psi^{-1}$
is correct. Indeed,\begin{eqnarray*}
\Psi^{-1}\left(\Psi\left(\alpha\right)\right) & = & \alpha\circ\left(p_{2}\circ\alpha\right)^{-1}\circ\left(p_{1}\circ\alpha\circ\left(p_{2}\circ\alpha\right)^{-1}\right)^{-1}\\
 & = & \alpha\circ\left(p_{2}\circ\alpha\right)^{-1}\circ p_{2}\circ\alpha\end{eqnarray*}
which is $\alpha$. One verifies in a similar way that $\Psi\left(\Psi^{-1}\left(\beta\right)\right)$
and prove that, up to taking a smaller $V_{1}$, the map $\Psi$ indeed
defines a local diffeomorphism around $\iota_{L}$.
\end{proof}
Notice that by construction, the sections $\alpha$ and $\Psi\left(\alpha\right)$
have the same graphs. We can now use this result to study the parametrisation
$\pi\mapsto L_{\pi}$.

\begin{lem}
\label{lem-fib-subb-diffeo}The map\begin{eqnarray*}
\mbox{Fib}\left(M,B\right) & \rightarrow & \mbox{Subb}\left(M\times B\right)\\
\pi & \mapsto & L_{\pi}\end{eqnarray*}
is a Fréchet diffeomorphism onto an open subset of $\mbox{Subb}\left(M\times B\right)$.\end{lem}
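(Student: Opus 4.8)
The plan is to factor the parametrisation as $\pi\mapsto\hat{\pi}\mapsto L_{\pi}$, where $\hat{\pi}=(\mathbb{I},\pi)$ is the associated section of the first projection $\mathrm{pr}_{1}:M\times B\rightarrow M$ and $L_{\pi}=\hat{\pi}(M)$ is its graph. The first arrow $\pi\mapsto\hat{\pi}$ is the Fr�chet diffeomorphism $C^{\infty}(M,B)\cong\Gamma(M,M\times B)$ already observed in the text, so the real content is to understand the graph map $\hat{\pi}\mapsto L_{\pi}$ and to match it with the chart of $\mbox{Subb}(M\times B)$ around a fixed $L_{\pi_{0}}$ with $\pi_{0}\in\mbox{Fib}(M,B)$. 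Global injectivity is immediate, since a section of a fibration is determined by its image: $L_{\pi_{1}}=L_{\pi_{2}}$ forces $\hat{\pi}_{1}=\hat{\pi}_{2}$, hence $\pi_{1}=\pi_{2}$.

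The key step is to read the map in a chart, using Lemma~\ref{lem-bi-fibration}. Since $L_{\pi_{0}}$ is the image of the section $\hat{\pi}_{0}$, I would choose a tubular neighbourhood $N$ of $L_{\pi_{0}}$ in $M\times B$ which is at the same time a fibre bundle for $\mathrm{pr}_{1}$; concretely, fixing a metric on $B$, one may take $N=\{(m,b):d_{B}(b,\pi_{0}(m))<\varepsilon\}$, an $\varepsilon$-ball bundle of $\mathrm{pr}_{1}$ and, for small $\varepsilon$, a tubular neighbourhood of $L_{\pi_{0}}$. Let $q_{0}:N\rightarrow L_{\pi_{0}}$ be the tubular projection, whose sections furnish the chart of $\mathcal{S}(M\times B)$ (and hence of the open subset $\mbox{Subb}(M\times B)$) around $L_{\pi_{0}}$, and set $p_{2}=(\mathrm{pr}_{1}|_{L_{\pi_{0}}})^{-1}\circ\mathrm{pr}_{1}|_{N}:N\rightarrow L_{\pi_{0}}$, which is a fibre bundle because $\mathrm{pr}_{1}|_{N}$ is and $\mathrm{pr}_{1}|_{L_{\pi_{0}}}$ is a diffeomorphism. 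The inclusion $\iota:L_{\pi_{0}}\hookrightarrow N$ is a common section of $q_{0}$ and $p_{2}$, and sections of $p_{2}$ correspond, through $\mathrm{pr}_{1}|_{L_{\pi_{0}}}$, exactly to the sections $\hat{\pi}$ of $\mathrm{pr}_{1}$ landing in $N$, i.e. to maps $\pi$ near $\pi_{0}$. Applying Lemma~\ref{lem-bi-fibration} to the pair $(q_{0},p_{2})$ yields a local Fr�chet diffeomorphism $\Psi$ between $\Gamma(q_{0})$ and $\Gamma(p_{2})$ near $\iota$ which preserves graphs (as noted right after that lemma). Composing $\pi\mapsto\hat{\pi}$, the identification $\Gamma(p_{2})\cong\{\hat{\pi}\}$, and $\Psi^{-1}$, the parametrisation $\pi\mapsto L_{\pi}$ becomes, in the chart $q_{0}$, precisely the graph-preserving map $\Psi^{-1}$ (which degenerates to the identity if the tube is chosen fibre-adapted); since every map in this chain is a Fr�chet diffeomorphism, $\pi\mapsto L_{\pi}$ is a local Fr�chet diffeomorphism at $\pi_{0}$.

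For openness of the image, observe that $\Psi$ identifies a whole neighbourhood of $\iota$ in $\Gamma(q_{0})$ with a neighbourhood of $\hat{\pi}_{0}$ coming from maps $\pi$, so the set $\{L_{\pi}\}$ contains an $\mathcal{S}(M\times B)$-neighbourhood of $L_{\pi_{0}}$. Intersecting with $\mbox{Subb}(M\times B)$, which is open in $\mathcal{S}(M\times B)$, and invoking Lemma~\ref{lem-equiv-subbundle-fibration}, which states that $L_{\pi}\in\mbox{Subb}(M\times B)$ if and only if $\pi\in\mbox{Fib}(M,B)$, I obtain that the image $\{L_{\pi}:\pi\in\mbox{Fib}(M,B)\}$ is an open subset of $\mbox{Subb}(M\times B)$ containing $L_{\pi_{0}}$. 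Together with global injectivity and the local-diffeomorphism property, this shows $\pi\mapsto L_{\pi}$ is a Fr�chet diffeomorphism onto an open subset.

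The genuinely delicate point is the one hidden in the chart construction: guaranteeing that $\mathrm{pr}_{1}|_{N}$ is an honest fibre bundle over $M$, so that Lemma~\ref{lem-bi-fibration} applies, rather than merely a submersion with non-compact fibres, to which Ehresmann's theorem does not directly apply. This is why I would not take an arbitrary tube but the explicit $\varepsilon$-ball bundle above, whose local triviality over $M$ is inherited from that of $\mathrm{pr}_{1}$; one then checks that for small $\varepsilon$ this ball bundle is indeed a tubular neighbourhood of $L_{\pi_{0}}$, i.e. diffeomorphic to its normal bundle, which follows from the fibrewise exponential map of the chosen metric on $B$. Everything else (smoothness of inversion and the graph-preservation identity) is routine once the two fibre-bundle structures on $N$ are in place.
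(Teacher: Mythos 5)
Your proof follows essentially the same route as the paper: both endow a tubular neighbourhood $N$ of $L_{\pi_{0}}$ with two fibre-bundle structures over $M\cong L_{\pi_{0}}$ (the tubular projection, whose sections give the standard chart of $\mbox{Subb}\left(M\times B\right)$, and the one induced by the first projection, whose sections are the $\hat{\pi}$), apply Lemma \ref{lem-bi-fibration} to read $\pi\mapsto L_{\pi}$ in that chart as the graph-preserving diffeomorphism $\Psi$, and conclude via injectivity. Your explicit $\varepsilon$-ball tube, which guarantees that the restriction of the first projection to $N$ is genuinely locally trivial, is a sensible refinement of a point the paper leaves implicit.
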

\begin{proof}
For any $\pi_{0}\in\mbox{Fib}\left(M,B\right)$, consider the graph
$L_{\pi_{0}}$ of $\hat{\pi}_{0}$ as explained at the beginning of
this section. There exists a tubular neighbourhood $N\subset M\times B$
of $L_{\pi_{0}}$ whose projection $q:N\rightarrow L_{\pi_{0}}$ is
{}``horizontal'', i.e. tangent to the fibres $M\times\left\{ b\right\} $,
$b\in B$. through the natural identification $\hat{\pi}_{0}:M\overset{\cong}{\rightarrow}L_{\pi_{0}}$,
one can view $N$ as a fibration over $M$. Denote by $p_{2}:N\rightarrow M$
the corresponding projection. On the other hand, we have of course
the {}``vertical'' projection $p_{1}:N\rightarrow M$ coming from
the projection on the first factor in $M\times B$. Finally, both
projections have a common section, precisely $\iota_{L_{\pi_{0}}}=\hat{\pi}_{0}$,
and one can thus apply Lemma \ref{lem-bi-fibration} which tells us
that the Fréchet manifolds of sections $\Gamma\left(p_{1}\right)$
and $\Gamma\left(p_{2}\right)$ are locally diffeomorphic. But $\Gamma\left(p_{1}\right)$
is just the open subset of $\mbox{Fib}\left(M,B\right)$, composed
of those fibrations $\pi:M\rightarrow B$ whose corresponding sections
$\hat{\pi}$ have their image lying in $N$. On the other hand, $\Gamma\left(p_{2}\right)$
is a suitable Fréchet chart for $\mbox{Subb}\left(M\times B\right)$
around $L_{\pi_{0}}$, as explained in Section \ref{sub-manifold-structure-Subb}.
But one can see easily that the map $\pi\mapsto L_{\pi}$ corresponds
in this chart precisely to the Fréchet diffeomorphism $\Psi$ from
Lemma \ref{lem-bi-fibration}, because $L_{\pi}$ is the graph of
$\hat{\pi}$. Therefore, we have proved that the map $\pi\mapsto L_{\pi}$
is a Fréchet local diffeomorphism. But it is also clearly injective,
therefore it is a diffeomorphism onto its image.
\end{proof}
This lemma together with Theorem \ref{thm-lag-subbundle-sous-varioche}
and Lemma \ref{lem-equiv-subbundle-fibration} finally imply directly
the following theorem.
\begin{thm}
\label{thm-fib_lag-submanifold-of-fib}$\mbox{Fib}_{\mbox{Lag}}\left(M,B\right)$
is a Fréchet submanifold of $\mbox{Fib}\left(M,B\right)$.
\end{thm}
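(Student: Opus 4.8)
The plan is to assemble the three preceding results, since the statement is really a corollary of the machinery already developed. First I would invoke Lemma \ref{lem-fib-subb-diffeo}, which furnishes a Fr\'echet diffeomorphism $\Phi:\pi\mapsto L_{\pi}$ from $\mbox{Fib}\left(M,B\right)$ onto an open subset $\mathcal{O}\subset\mbox{Subb}\left(M\times B\right)$, where $M\times B$ is regarded as the trivial symplectic bundle over $B$ via the second projection. This reduces the problem to describing the image $\Phi\left(\mbox{Fib}_{\mbox{Lag}}\left(M,B\right)\right)$ and recognising it as a submanifold.

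Next I would pin down that image using Lemma \ref{lem-equiv-subbundle-fibration}: a fibration $\pi$ is Lagrangian precisely when $L_{\pi}$ is a Lagrangian subbundle of $p:M\times B\rightarrow B$. Consequently $\Phi$ restricts to a bijection between $\mbox{Fib}_{\mbox{Lag}}\left(M,B\right)$ and $\mathcal{O}\cap\mbox{Subb}_{\mbox{Lag}}\left(M\times B\right)$. I would then apply Theorem \ref{thm-lag-subbundle-sous-varioche}, which asserts that $\mbox{Subb}_{\mbox{Lag}}\left(M\times B\right)$ is a Fr\'echet submanifold of $\mbox{Subb}\left(M\times B\right)$. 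Since the submanifold property is local, the intersection with the open set $\mathcal{O}$ is again a Fr\'echet submanifold, this time of $\mathcal{O}$.

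Finally, as $\Phi$ is a Fr\'echet diffeomorphism carrying $\mathcal{O}$ onto $\mbox{Fib}\left(M,B\right)$ and the Lagrangian locus onto the Lagrangian locus, and since diffeomorphisms preserve the property of being a submanifold (charts transport through $\Phi$), the desired conclusion follows at once. I do not expect a genuine obstacle at this stage: the only thing to verify is that each of the three cited results applies verbatim to the symplectic bundle $X=M\times B$ over $B$ (which requires $M$ and $B$ compact, as assumed throughout this section, so that Theorem \ref{lem-fibred-hodge-decomposition} and the fibred WTN of Theorem \ref{thm-WTN-for-lag-subbundle} are available). The substantive difficulties have already been dispatched in those earlier results, so here the work is purely one of transport of structure along $\Phi$.
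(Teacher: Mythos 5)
Your assembly of Lemma \ref{lem-fib-subb-diffeo}, Lemma \ref{lem-equiv-subbundle-fibration} and Theorem \ref{thm-lag-subbundle-sous-varioche}, transporting the submanifold structure through the diffeomorphism $\pi\mapsto L_{\pi}$, is exactly the paper's argument (which the author states in one sentence as a direct consequence of those three results). Your proposal is correct and follows the same route, merely spelling out the transport-of-structure step in more detail.
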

As mentioned in the introduction, $\mbox{Fib}_{\mbox{Lag}}\left(M,B\right)$
is actually a Fréchet principal bundle with structure group $\mbox{Diff}\left(B\right)$
acting by left composition of submersions $\pi:M\rightarrow B$. This
result and its application to Hamiltonian completely integrable systems
will be presented in subsequent papers \cite{humiliere_roy_1,humiliere_roy_2}.

\subsection*{Acknowledgements}

The final writing step of this article was done in the \emph{Mathematisches
Institut Oberwolfach} during a {}``Research in Pair'' program. I
wholeheartedly thank the MFO staff who provides the mathematicians
with perfect working conditions. 

I am also very indepted to V. Humilière for his numerous useful comments
and discussions during the whole development of this project, in its
infancy and up to the final reading.

\bibliographystyle{plain}
\bibliography{/home/roy/Documents/maths/biblio/biblio_nico}

\end{document}